\newtheorem{theorem}{\textbf{Theorem}}[section]
\newtheorem{lemma}{\textbf{Lemma}}[section]
\newtheorem{proposition}{\textbf{Proposition}}[section]
\newtheorem{corollary}{\textbf{Corollary}}[section]
\newtheorem{remark}{\textbf{Remark}}[section]
\newtheorem{definition}{\textbf{Definition}}[section]
\def\be{\begin{equation}}
\def\ee{\end{equation}}
\def\bea{\begin{eqnarray}}
\def\eea{\end{eqnarray}}
\def\bt{\begin{theorem}}
\def\et{\end{theorem}}
\def\bl{\begin{lemma}}
\def\el{\end{lemma}}
\def\br{\begin{remark}}
\def\er{\end{remark}}
\def\bp{\begin{proposition}}
\def\ep{\end{proposition}}
\def\bc{\begin{corollary}}
\def\ec{\end{corollary}}
\def\bd{\begin{definition}}
\def\ed{\end{definition}}
\def\s0t{\sup _{0\leq \tau\leq t}}
\def\C0T{C([0,T];\,}
\def\DAS{D( A^{\frac{S}{2}})}
\def\DAS1{D( A^{ \frac{S+1}{2}})}
 \def\non{\nonumber }
\def \no#1#2#3 {{\bf #1} (#3), #2.}
\def \eds#1#2#3 {#1, #2, #3.}
\title{On the rigidity of nematic liquid crystal flow on $\mathbb S^2$}
\date{}
\author{Changyou Wang \footnote{Department of Mathematics,
University of Kentucky, Lexington, KY 40506, USA ({\tt
cywang@ms.uky.edu})}
        \and
        Xiang Xu \footnote{Department of Mathematical Sciences, Carnegie Mellon
University, Pittsburgh, PA 15213, USA ({\tt xuxiang@andrew.cmu.edu})
}
}
\begin{document}

\maketitle

\begin{abstract}
In this paper we establish the uniformity property of a simplified
Ericksen-Leslie system modelling the hydrodynamics of nematic liquid crystals
on the two dimensional unit sphere $\mathbb{S}^2$, namely the uniform convergence
in $L^2$ to a steady state exponentially as $t$ tends to infinity.
The main assumption, similar to Topping \cite{T97}, concerns the equation of liquid crystal
director $d$ and states that at infinity time, a weak limit $d_\infty$ and any bubble
$\omega_i$ ($1\le i\le l$) share a common orientation. As consequences, the uniformity
property holds under various types of small initial data.
\end{abstract}

\section{Introduction}

The  Ericksen-Leslie system modelling the hydrodynamics of nematic liquid crystals was proposed by
Ericksen and Leslie during the period between 1958 and 1968 \cite{Ericksen, Leslie}. It is a macroscopic continuum
description of the time evolution of the materials under the influence of both the flow velocity
field $u(x,t)$ and the macroscopic description of the microscopic orientation configuration
$d(x,t)$ of rod-like liquid crystals (i.e. $d(x,t)$ is a unit vector in $\mathbb R^3$). In order to effectively analyze it, Lin \cite{Lin} proposed a simplified
version of the Ericksen-Leslie system, which is called the
nematic liquid crystal flow and is given by
\begin{equation}
\label{LLF}
\begin{cases}
u_t+u\cdot\nabla u + \nabla P = \Delta u - \nabla d \Delta d,\\
\mbox{div} \ u = 0, \\
d_t + u\cdot\nabla d = \Delta d + |\nabla d|^2d.
\end{cases}
\end{equation}
Roughly speaking, the system (\ref{LLF}) is a coupling between the non-homogeneous
Naiver-Stokes equation and the transported heat flow of harmonic maps to $\mathbb S^2$.
The system (\ref{LLF}) has generated great interests and activities among analysts, and there have been many research works
on (\ref{LLF}) recently (see, e.g. \cite{LL95} \cite{LL96} \cite{LL01} \cite{LLW10} \cite{LW10} \cite{H11} \cite{XZ11}
and \cite{LLZ12}).

In this paper, we are mainly interested in the long time dynamics of the nematic liquid crystal flow
(\ref{LLF})  on $ \mathbb{S}^2 \times (0, +\infty)$, where $\mathbb S^2 \hookrightarrow\mathbb R^3$ is the unit sphere
 that is equipped with the
standard metric $g_0$. To begin with, we introduce some notations and explain terms in (\ref{LLF}).
First, the fluid velocity field $u(x,t)\in T_{x}\mathbb S^2\times \{t\}\equiv T_x\mathbb S^2$,
the liquid crystal molecule director field $d(x,t)\in \mathbb S^2$, and
the pressure function $P(x,t)\in\mathbb R$ for $(x,t)\in \mathbb S^2\times [0,+\infty)$.
In the system (\ref{LLF}), $\nabla$ stands for the gradient operator on $\mathbb S^2$,
$\mbox{div}(={\rm{div}}_{g_0})$ and $\Delta(=\Delta_{g_0}={\rm{div}}_{g_0}\nabla)$ represent the divergence
operator and the Laplace-Beltrami operator on $(\mathbb{S}^2,g_0)$. Hence the second equation (\ref{LLF})$_2$
describes the incompressibility of the
fluid. The convection term $u\cdot\nabla u$ in the first equation (\ref{LLF})$_1$
is the directed differentiation of $u$ with respect to the direction $u$
itself, which is interpreted as the covariant derivative
$D_{u}u$. Here $D$ denotes the covariant
derivative operator on $(\mathbb S^2, g_0)$.  Following the terminology in \cite{TR88},
we know that $\Delta u$ is also accounted for by the Bochner
Laplacian operator $-D^{\ast} D u$, where $D^\ast$ is the adjoint operator of $D$ with
respect to the  $L^2(\mathbb{S}^2, T\mathbb{S}^2)$ inner product.
Write $d=(d^1, d^2, d^3)\in \mathbb{S}^2\hookrightarrow\mathbb{R}^3$.
Then in (\ref{LLF})$_3$, $\displaystyle\nabla d \Delta d=\sum_{i=1}^{3}\nabla d^i \Delta d^i$,
 the $i$-th component of $u\cdot\nabla d$ equals to $g_0(u, \nabla d^i)$,
$\displaystyle\Delta d=\big(\Delta_{g_0} d^1, \Delta_{g_0} d^2, \Delta_{g_0} d^3\big)$,
and $\displaystyle |\nabla d|^2=g_0(\nabla d,\nabla d)$.

We will consider the nematic liquid crystal flow (\ref{LLF}) along with the initial condition:
\begin{equation}\label{IC}
\big(u(x,0), d(x,0)\big)=\big(u_0(x), d_0(x)\big), \ x\in\mathbb S^2,
\end{equation}
where $u_0\in L^2(\mathbb S^2, T\mathbb S^2)$ is a divergence free tangential vector field on ($\mathbb S^2,g_0$) with zero average:
\begin{equation} \int_{\mathbb{S}^2}u_0(x)\,dv_{g_0}=0\  \ {\rm{and}}\ \ {\rm{div}}\ u_0=0 \ {\rm{on}}\ \mathbb S^2,
\label{zero_ave}
\end{equation}
and $\displaystyle d_0\in H^1(\mathbb S^2,\mathbb S^2):=\Big\{d\in H^1(\mathbb S^2,\mathbb R^3):
\ d(x)\in\mathbb R^3 \ {\rm{a.e.}}\ x\in\mathbb S^2\Big\}$.

In the recent paper \cite{LLW10}, Lin, Lin, and Wang have proved the global
existence of weak solutions to the initial and boundary value problem of
the nematic liquid crystal flow (\ref{LLF}) on any bounded domain
$\Omega\subset\mathbb R^2$, that are smooth away from at most
finitely many singular times (see
also \cite{H11} for discussion in $\mathbb{R}^2$). The
uniqueness of the global weak solution constructed by \cite{LLW10}
has been given by
\cite{LW10} (see \cite{XZ11} for a different proof). It is an open and
challenging problem whether there exists a global weak solution to the
problem (\ref{LLF})-(\ref{IC}) in $\mathbb R^3$. On
the other hand, concerning the long time behavior of the global
solution established in \cite{LLW10}, the authors have only shown the
subsequential convergence at time infinity. Since the structure of the set of equilibrium
to the problem (\ref{LLF}) is a
continuum, whether the limiting point of (\ref{LLF}) at time infinity is unique is a very interesting problem.

The aim of this paper is to study this
problem for (\ref{LLF})-(\ref{IC}) under certain assumptions on the initial data. The key
ingredient is to apply a \L ojaciewicz-Simon type approach, which
was originally studied by L. Simon in \cite{S83} for a large class of
nonlinear geometric evolution equations. Due to the nonlinear
constraint $|d|=1$, we cannot establish the related \L
ojaciewicz-Simon type inequality for the Ginzburg-Landau approximation version
of (\ref{LLF}) as in \cite{W10, WXL12}. However,
there is a counterpart for our problem if we consider the domain being the
unit sphere, namely the heat flow of harmonic maps from $\mathbb S^2$ to
$\mathbb S^2$ considered by Topping \cite{T97}.

We would like to point that by slightly modifying the proof of the regularity Theorem 1.2 in \cite{LLW10},  \
the existence Theorem 1.3 of \cite{LLW10} and the uniqueness theorem of \cite{LW10}
can be extended to (\ref{LLF})-(\ref{IC}) on $\mathbb S^2$ without much difficulty.
More precisely, we have
\begin{theorem}\label{existence} Suppose $u_0\in L^2(\mathbb S^2, T\mathbb S^2)$ satisfies (\ref{zero_ave})
and $d_0\in H^1(\mathbb S^2,\mathbb S^2)$. Then there exists a unique, global weak solution
$(u,d)\in \big(L^\infty_tL^2_x(\mathbb S^2\times [0,+\infty), T\mathbb S^2)\big)\times \big(L^\infty_t H^1_x(\mathbb S^2\times
[0,+\infty), \mathbb S^2)\big)$ of (\ref{LLF})-(\ref{IC}) such that
the following properties hold:
\begin{itemize}
\item [(i)] There exists a non-negative integer $L$ depending only on $(u_0,d_0)$ and
$0<T_1<\cdots<T_L<+\infty$ such that
$$(u,d)\in C^\infty\Big(\mathbb S^2\times \big((0,+\infty)\setminus\{T_i\}_{i=1}^L\big)\Big).$$
\item[(ii)] For $1\le i\le L$,
\begin{equation}\label{energy_concen}
\limsup_{t\uparrow T_i}\max_{x\in\mathbb S^2}\int_{\mathbb S^2\cap B_r(x)}(|u|^2+|\nabla d|^2)(y,t)\,dv_{g_0}(y)\ge 8\pi,
\ \forall r>0.
\end{equation}
\item[(iii)] There exist $t_k\uparrow +\infty$, a harmonic map $d_\infty\in C^\infty(\mathbb S^2,\mathbb S^2)$, and nontrivial
harmonic maps $\{\omega_i\}$ ($1\le i\le K$ for some non-negative integer $K$) such that
$$u(t_k)\rightarrow 0 \ {\rm{strongly\ in}}\ H^1(\mathbb S^2), \ d(t_k)\rightarrow d_\infty
\ {\rm{weakly\ in}}\ H^1(\mathbb S^2,\mathbb S^2),$$
and
\begin{equation}\label{energy_id}
\int_{\mathbb S^2}|\nabla d(t_k)|^2\,dv_{g_0}\rightarrow\int_{\mathbb S^2}|\nabla d_\infty|^2\,dv_{g_0}
+\sum_{i=1}^K\int_{\mathbb S^2}|\nabla \omega_i|^2\,dv_{g_0}.
\end{equation}
\item[(iv)] If
$$\int_{\mathbb S^2}(|u_0|^2+|\nabla d_0|^2)\,dv_{g_0}\le 8\pi,$$
then $(u,d)\in C^\infty(\mathbb S^2\times [0,+\infty), T\mathbb S^2\times \mathbb S^2)$.
Moreover, there exist $t_k\uparrow +\infty$ and a harmonic map $d_\infty\in C^\infty(\mathbb S^2,\mathbb S^2)$
such that
$$\big(u(t_k),d(t_k)\big)\rightarrow \big(0,d_\infty\big) \ {\rm{strongly\ in}}\ H^1(\mathbb S^2).$$
\end{itemize}
\end{theorem}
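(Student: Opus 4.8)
The plan is to follow, almost verbatim, the construction and analysis of Lin--Lin--Wang \cite{LLW10} and the uniqueness argument of Lin--Wang \cite{LW10}, transplanting each step from a flat planar domain to the closed surface $(\mathbb S^2,g_0)$ while tracking the curvature contributions that the geometry introduces. The starting point is the \emph{basic energy law}: testing $(\ref{LLF})_1$ with $u$ and $(\ref{LLF})_3$ with the tension field $\tau(d):=\Delta d+|\nabla d|^2 d$, using $\mathrm{div}\,u=0$ and $d_t\cdot d=(u\cdot\nabla d)\cdot d=0$, and observing that the two coupling terms $\int_{\mathbb S^2}(\nabla d\,\Delta d)\cdot u$ and $\int_{\mathbb S^2}(u\cdot\nabla d)\cdot\Delta d$ cancel, one obtains
\begin{equation}
\frac{d}{dt}\,\frac12\int_{\mathbb S^2}\big(|u|^2+|\nabla d|^2\big)\,dv_{g_0}=-\int_{\mathbb S^2}\big(|\nabla u|^2+|\tau(d)|^2\big)\,dv_{g_0}.
\end{equation}
Since $\mathbb S^2$ is compact with bounded geometry, the convection term still contributes nothing and the only genuinely new terms are lower-order curvature terms absorbed by interpolation. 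Writing $\mathcal E(t):=\int_{\mathbb S^2}(|u|^2+|\nabla d|^2)\,dv_{g_0}$, integration in time yields the monotonicity of $\mathcal E$ and the global dissipation bound $\int_0^\infty\!\!\int_{\mathbb S^2}(|\nabla u|^2+|\tau(d)|^2)\,dv_{g_0}\,dt\le\frac12\mathcal E(0)<\infty$. Weak compactness then produces a global weak solution, and the local structure and uniqueness theorems of \cite{LLW10,LW10} upgrade it to the asserted regularity class, their proofs being purely local and hence insensitive to replacing $\mathbb R^2$ by coordinate patches on $\mathbb S^2$.

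For (i) and (ii) the essential input is a Struwe-type $\varepsilon$-regularity for the coupled flow: there is $\varepsilon_0>0$ so that smallness of the localized energy on a parabolic cylinder forces interior higher regularity with uniform estimates. This is a local assertion, so the argument of \cite{LLW10} carries over once all derivatives are read as covariant derivatives and the smooth bounded curvature of $(\mathbb S^2,g_0)$ is treated as a controlled perturbation. A singular time $T_i$ can occur only where the $\varepsilon_0$-threshold fails, and a blow-up there produces a nonconstant harmonic sphere; since the least energy of a nonconstant harmonic map $\mathbb S^2\to\mathbb S^2$ is $8\pi$ in the $\int|\nabla\cdot|^2$ normalization, we obtain the concentration (\ref{energy_concen}). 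As $\mathcal E(t)$ is nonincreasing and at least $8\pi$ is carried off at each singular time, their number is finite, indeed $L\le\mathcal E(0)/(8\pi)$.

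For (iii) I would exploit the dissipation bound to select $t_k\uparrow+\infty$ along which $\|\nabla u(t_k)\|_{L^2}\to 0$ and $\|\tau(d)(t_k)\|_{L^2}\to 0$ simultaneously. The constraint (\ref{zero_ave}) together with the Poincar\'e inequality on $\mathbb S^2$ (valid since $\mathbb S^2$ carries no nonzero parallel vector field) gives $u(t_k)\to 0$ strongly in $H^1$, while $d(t_k)$ is an approximate harmonic map of uniformly bounded energy, so after passing to a further subsequence $d(t_k)\rightharpoonup d_\infty$ weakly in $H^1$ with $d_\infty$ a smooth harmonic map. The energy identity (\ref{energy_id}) is then obtained by the bubble-tree analysis in the spirit of \cite{T97,LLW10}: one rescales at each concentration point to extract the bubbles $\omega_i$ and proves the \emph{no-neck-energy} property, i.e. that no energy is lost in the annular necks joining the body map $d_\infty$ to the bubbles. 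Part (iv) follows at once: if $\mathcal E(0)\le 8\pi$ the $8\pi$ concentration of (\ref{energy_concen}) can never be attained, so no singular time exists and $(u,d)$ is globally smooth, and the same blow-up along $t_k$ cannot split off a bubble, whence $(u(t_k),d(t_k))\to(0,d_\infty)$ strongly in $H^1$.

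The step I expect to be the main obstacle is precisely this no-neck energy identity at time infinity: whereas the $\varepsilon$-regularity, the finiteness of singular times, and the weak subconvergence transplant routinely to $\mathbb S^2$, establishing (\ref{energy_id}) requires the sharp decay estimates on the necks, which must be re-derived with the covariant Laplacian and with the velocity coupling present. The saving grace is that along $t_k$ the term $u\cdot\nabla d$ tends to zero in $L^2$, so the neck analysis effectively reduces to that for the harmonic map heat flow from $\mathbb S^2$ studied by Topping \cite{T97}, and the curvature of $(\mathbb S^2,g_0)$, being uniformly bounded at the scale of the shrinking necks, enters only as a negligible correction.
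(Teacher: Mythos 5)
The paper contains no actual proof of this theorem: it simply asserts that the regularity Theorem 1.2 and existence Theorem 1.3 of \cite{LLW10}, together with the uniqueness theorem of \cite{LW10}, extend from planar domains to $\mathbb S^2$ with minor modifications, which is precisely the route you take, and your sketch correctly identifies every ingredient that adaptation requires (the exact energy law with the Bochner Laplacian convention, $\varepsilon$-regularity, the $8\pi$ threshold coming from nonconstant harmonic spheres in the $\int|\nabla\cdot|^2$ normalization, preservation of the zero average of $u$ plus the Poincar\'e inequality, and the energy identity for maps whose tension fields are bounded in $L^2$). Two cautions on details you glossed over: the phrase ``weak compactness then produces a global weak solution'' cannot be the actual mechanism, since the coupling term $\nabla d\,\Delta d$ is quadratic in highest derivatives and does not pass to weak limits --- in \cite{LLW10} the solution is instead built directly as a smooth solution extended across finitely many singular times, restarting at each $T_i$ from the weak $H^1$ limit, with uniqueness from \cite{LW10} then covering the whole class; and in part (iv) the borderline case $\int_{\mathbb S^2}(|u_0|^2+|\nabla d_0|^2)\,dv_{g_0}=8\pi$ needs the extra dichotomy that either the flow is stationary (so trivially smooth) or strict dissipation forces the energy below $8\pi$ at some positive time, after which neither finite-time concentration nor bubbling at $t=\infty$ can occur.
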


The main results in this paper address the issue of unique limit at time infinity of the solution $(u,d)$ to (\ref{LLF})-(\ref{IC})
given by Theorem \ref{existence}. We find sufficient conditions, similar to that by \cite{T97} on the heat flow of harmonic
maps from $\mathbb S^2$ to $\mathbb S^2$, on $d_\infty$ and $\omega_i,  1\le i\le K$, to guarantee the uniform property
of (\ref{LLF}) at time infinity, i.e., $d_\infty$ is the unique limit of $d$ at $t=+\infty$.
It is worth mentioning that in contrast with the heat flow of harmonic maps considered by \cite{S85} and \cite{T97},
because the local energy inequality of (\ref{LLF}) (see \cite{LLW10} Lemma 4.2) involves
$L^2$-norm of both $|P-c|$ and $|\nabla d|^2$, we cannot show the
uniqueness of the location of bubbling positions of the bubbles $\omega_i$ ($1\le i\le K$).

The paper is written as follows. In \S2, we discuss the uniform limit of (\ref{LLF}) in the space $L^2(\mathbb S^2)$
at $t=+\infty$, and prove the first main theorem 2.1 and corollary 2.1. 
In \S3, we discuss the uniform limit of (\ref{LLF}) in the space $H^k(\mathbb S^2)$ ($k\ge 1$)
at $t=+\infty$, and prove the second main theorem 3.1 and corollary 3.1.


\section{Uniform limit in $L^2(\mathbb S^2)$}
\setcounter{equation}{0}

\noindent Unless explicitly specified, henceforth we will not
distinguish the inner product between $T\mathbb{S}^2$ and
$\mathbb{R}^3$, nor $d \in \mathbb{S}^2 $ and its isometric
embedding $(d^1, d^2, d^3)$ into $\mathbb{R}^3$. For the sake of
simplicity, $\|\cdot\|_{L^2(\mathbb S^2)}$ will be shorthanded by
$\|\cdot\|$, and $\|\cdot\|_{L^p(\mathbb S^2)}$ will be abbreviated
by $\|\cdot\|_{L^p}$ for $p\neq 2$.

We will establish the $L^2$-convergence of the flow $(u,d)$ to (1.1)-(1.3) to a single steady state
solution $(0, d_\infty)$, with $d_\infty\in C^\infty(\mathbb S^2,\mathbb S^2)$ a harmonic map,
as $t$ tends to infinity.  We first recall some
notations introduced by  Topping \cite{T97}.

Let us consider a map $d\in H^1(\mathbb S^2, \mathbb S^2)$.  We use $z = x + iy$ as a complex coordinate
on the domain $\mathbb S^2\equiv\overline{\mathbb C}$, via the stereographic projection.
Set $dz=dx+idy, d\bar{z}=dx-idy$, and write the metric $g_0$ on the domain $\mathbb S^2$
as $\displaystyle \sigma(z)^2 dzd\bar z$, where
$$\sigma(z)=\frac{2}{1+|z|^2}, \ z\in\overline{\mathbb C}$$
Let $u$ denote a complex coordinate on the target $\mathbb S^2$, and write the metric $g_0$ on the target
$\mathbb S^2$ as $\displaystyle \sigma(u)^2 du d\bar u$.
 and write
$$
  d_z = \frac12(d_x-id_y), \ \ d_{\bar{z}}= \frac12(d_x+id_y).$$
The $\partial$-energy and $\bar\partial$-energy of $d$ are given by
 $$\ E_{\partial}(d)=\frac{i}2\int_{\mathbb C}\rho^2(d)|d_z|^2 \,dz\wedge d\bar z,
  \ \ E_{\bar{\partial}}(d)=\frac{i}2\int_{\mathbb C}\rho^2(d)|d_{\bar{z}}|^2\,dz\wedge d\bar z.
$$
Recall the Dirichlet energy of $d$ is defined by
$$E(d):=\frac12\int_{\mathbb S^2}|\nabla d|^2\,dv_{g_0}.$$
It is easy to see that
\begin{align}
   E(d) &= E_{\partial}(d)+E_{\bar{\partial}}(d),
  \label{sum of energy}  \\
  4\pi\,\mbox{deg}(d) &= E_{\partial}(d)-E_{\bar{\partial}}(d).
  \label{difference of energy}
\end{align}
Here ${\rm{deg}}(d)$ denotes the topological degree of $d:\mathbb S^2\to\mathbb S^2$, which is
well-defined for maps $d\in H^1(\mathbb S^2,\mathbb S^2)$ (see Brezis-Nirenberg \cite{BN}).
\begin{theorem}\label{dbar-small} There exist $\epsilon_0>0$ and $T_0\ge 1$ such that if
$(u,d):\mathbb S^2\times (0,+\infty)\to T\mathbb S^2\times \mathbb S^2$ is the global
solution of the nematic liquid crystal flow (1.1)-(1.3) obtained by Theorem \ref{existence}, satisfying
\begin{equation}
\frac12\big\|u(T_0)\big\|^2+2\min\Big\{E_{\partial}(d(T_0)),\ E_{\overline{\partial}}(d(T_0))\Big\}\le\epsilon_0.
\label{dbar-small1}
\end{equation}
then there exist a smooth harmonic map $d_\infty\in C^\infty(\mathbb S^2,\mathbb S^2)$, a nonnegative
integer $k$, and $C_1, C_2>0$  such that\\
(i) as $t\rightarrow +\infty$, it holds that
$$u(t)\rightarrow 0\ {\rm{strongly\ in}}\  H^1(\mathbb S^2),
d(t)\rightarrow d_\infty  \ {\rm{weakly\ in}} \ H^1(\mathbb S^2)
\ {\rm{and\ strongly\ in}}\ L^2(\mathbb S^2).
$$
(ii)
\begin{equation}
\big\|u(t)\|+\big\|d(t)-d_\infty\big\|\le C_1e^{-C_2 t}, \ \forall t\ge T_0.
\end{equation}
(iii)
\begin{equation}
\big|E(d(t))-E(d_\infty)-4\pi k\big|\le C_1e^{-C_2 t}, \ \forall t\ge T_0.
\end{equation}
\end{theorem}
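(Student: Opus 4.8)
The plan is to run a \L ojasiewicz--Simon scheme in the spirit of Topping \cite{T97}, reducing conclusions (i)--(iii) to the exponential decay of the single scalar quantity $\Phi(t):=\frac12\|u(t)\|^2+E_{\bar\partial}(d(t))$, where I take without loss of generality the minority energy in (\ref{dbar-small1}) to be $E_{\bar\partial}$, so that the hypothesis reads $\Phi(T_0)\lesssim\epsilon_0$ and $d$ is close to a holomorphic map. Two structural facts organize the argument. First, testing the velocity equation in (\ref{LLF}) with $u$ and the director equation with its tension field $\tau(d):=\Delta d+|\nabla d|^2d$ gives the dissipation law $\frac{d}{dt}(\frac12\|u\|^2+E(d))=-\|\nabla u\|^2-\|\tau(d)\|^2$, the pressure and convective terms dropping by $\mbox{div}\,u=0$ and the two coupling terms cancelling. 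Second, since $\mbox{deg}(d)$ is a homotopy invariant it is constant on each interval of smoothness, and the quantization identities (\ref{sum of energy})--(\ref{difference of energy}) give $E(d)=4\pi\,\mbox{deg}(d)+2E_{\bar\partial}(d)$; thus $E_{\bar\partial}$ measures exactly the distance of $E(d)$ from its topological floor, and (iii) will reduce to $|E(d(t))-E(d_\infty)-4\pi k|=2E_{\bar\partial}(d(t))$ once $k$ is identified as the total degree carried off by bubbles.

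I would first compute the evolution of $\Phi$ along the coupled flow. Writing $d_t=\tau(d)-u\cdot\nabla d$ and differentiating $E_{\bar\partial}=\frac12(E-4\pi\,\mbox{deg}\,d)$ gives $\frac{d}{dt}E_{\bar\partial}=-\frac12\|\tau(d)\|^2+\frac12\int_{\mathbb S^2}(u\cdot\nabla d)\cdot\tau(d)\,dv_{g_0}$, while the velocity equation gives $\frac{d}{dt}\frac12\|u\|^2=-\|\nabla u\|^2-\int_{\mathbb S^2}(u\cdot\nabla d)\cdot\tau(d)\,dv_{g_0}$. Adding, the coupling terms partially cancel and leave
\[
\frac{d}{dt}\Phi=-\|\nabla u\|^2-\tfrac12\|\tau(d)\|^2-\tfrac12\int_{\mathbb S^2}(u\cdot\nabla d)\cdot\tau(d)\,dv_{g_0}.
\]
The remaining cross term I bound by $\frac14\|\tau(d)\|^2+\frac14\|u\cdot\nabla d\|^2$ and then by the two-dimensional interpolation $\|u\|_{L^4}^2\le C\|u\|\,\|\nabla u\|$ together with H\"older; when $\|u\|$ is small this is absorbed into $-\|\nabla u\|^2-\frac12\|\tau(d)\|^2$, yielding $\frac{d}{dt}\Phi\le-\frac12\|\nabla u\|^2-\frac14\|\tau(d)\|^2$ as long as $\Phi(t)$ stays small.

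The two coercivity inputs are the Poincar\'e inequality for divergence-free mean-zero fields, $\|\nabla u\|^2\ge c_P\|u\|^2$, and the sharp gradient inequality $\|\tau(d)\|^2\ge c_L\,E_{\bar\partial}(d)$ valid for maps of small $\bar\partial$-energy after Topping \cite{T97}, which is the \L ojasiewicz inequality with the optimal exponent $\frac12$. Together they give $\frac{d}{dt}\Phi\le-2C_2\,\Phi$, and a continuation argument starting from $\Phi(T_0)\lesssim\epsilon_0$ keeps $\Phi$ small for all $t\ge T_0$, so $\Phi(t)\le\Phi(T_0)e^{-2C_2(t-T_0)}$. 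This bounds $\|u(t)\|$ and $E_{\bar\partial}(d(t))$ by $C_1e^{-C_2t}$; integrating the dissipation gives $\int_t^\infty(\|\nabla u\|^2+\|\tau(d)\|^2)\,ds\le Ce^{-2C_2t}$, whence $\int_t^\infty\|d_s\|\,ds\le Ce^{-C_2t}$ by Cauchy--Schwarz over unit time-intervals, which furnishes the $L^2$-Cauchy property and the decay of $\|d(t)-d_\infty\|$ in (ii). The limit $d_\infty$ is harmonic since $\tau(d(t))\to0$ in $L^2$, holomorphic since $E_{\bar\partial}(d_\infty)=\lim E_{\bar\partial}(d(t))=0$, and smooth by elliptic regularity, which, with $u(t)\to0$ in $H^1$ upgraded through parabolic smoothing, gives (i); finally (iii) follows from $E(d(t))-4\pi\,\mbox{deg}(d)=2E_{\bar\partial}(d(t))\le C_1e^{-C_2t}$ and the identity $E(d_\infty)+4\pi k=4\pi\,\mbox{deg}(d)$, which comes from degree conservation and the bubbling energy identity (\ref{energy_id}) with all bubbles holomorphic.

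I expect the chief obstacle to be the gradient inequality $\|\tau(d)\|^2\ge c_L\,E_{\bar\partial}(d)$: proving it with $c_L$ uniform over all maps of small $\bar\partial$-energy, including those degenerating through infinite-time bubbling, is the deep input of Topping's method and requires showing that the coercivity of the Jacobi operator at the holomorphic limit survives the loss of compactness. A second difficulty, absent for the pure harmonic map heat flow, is the fluid coupling: since $\|\nabla d\|_{L^4}$ need not remain bounded when energy concentrates, the absorption of $\int(u\cdot\nabla d)\cdot\tau(d)$ must balance this possible growth against the exponential smallness of $u$, and the whole differential-inequality argument must be organized across the finitely many finite-time singularities of Theorem \ref{existence}, where the identities hold only in integrated form and the energy merely jumps downward.
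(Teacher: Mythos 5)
Your overall architecture matches the paper's proof: a Lyapunov quantity combining kinetic energy with the minority holomorphic energy, Topping's gradient inequality (Lemma \ref{LS inequality}) together with the Poincar\'e inequality for the mean-zero field $u$ (Lemma \ref{average}), a differential inequality for the square root of the Lyapunov quantity yielding exponential decay and time-integrability of the dissipation, and finally identification of the limit via Theorem \ref{existence} (iii) and energy/degree quantization. However, there is a genuine gap at the core step, which you flag yourself but do not resolve. Your functional $\Phi=\frac12\|u\|^2+E_{\bar\partial}(d)$ carries the coefficient $1$ on the minority energy, so its derivative retains the cross term $-\frac12\int_{\mathbb S^2}(u\cdot\nabla d)\cdot(\Delta d+|\nabla d|^2d)\,dv_{g_0}$, and the proposed absorption fails: any bound on $\|u\cdot\nabla d\|_{L^2}$ requires control of $\|\nabla d\|_{L^4}$, which is simply not available here. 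Only the minority energy is small; the total energy $E(d)=4\pi\,{\rm deg}(d)+2E_{\bar\partial}(d)$ can be arbitrarily large and can concentrate, so $\|\nabla d\|_{L^4}$ may be large (indeed unbounded along a bubbling sequence), and after applying Poincar\'e to $\|u\|$ the absorption condition reduces to $\|\nabla d\|_{L^4}^2\lesssim 1$, which cannot be guaranteed. The paper's device---and this is exactly what the factor $2$ in the hypothesis (\ref{dbar-small1}) is for---makes the cross term vanish identically rather than absorbing it: take $\frac12\|u\|^2+2E_{\bar\partial}(d)=\frac12\|u\|^2+E(d)-4\pi\,{\rm deg}(d)$; since ${\rm deg}(d(t))$ is constant for $t\ge T_0$ (one chooses $T_0$ beyond the last singular time $T_L$, so no patching across singularities is needed in this theorem), the derivative of this quantity equals that of $\frac12\|u\|^2+E(d)$, which by the basic energy law (\ref{basic energy law}) is exactly $-\|\nabla u\|^2-\|\Delta d+|\nabla d|^2d\|^2$, with no cross term at all.

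The same missing $L^4$ control resurfaces in your Cauchy estimate: you integrate $\|d_s\|_{L^2}$ in time, but $\|d_t\|_{L^2}\le\|\Delta d+|\nabla d|^2d\|+\|u\cdot\nabla d\|_{L^2}$ again needs $\|\nabla d\|_{L^4}$. The paper avoids this by estimating $d_t$ in $L^1(\mathbb S^2)$ instead, $\|d_t\|_{L^1}\le\|u\|\,\|\nabla d\|+C\|\Delta d+|\nabla d|^2d\|$, which involves only energy-level norms (plus Poincar\'e for $\|u\|$), and then converts the resulting exponential $L^1$ Cauchy bound into $L^2$ convergence by the elementary inequality $\|d(t_2)-d(t)\|^2\le 2\|d(t_2)-d(t)\|_{L^1}$, valid because $|d|\le 1$. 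With these two corrections your scheme becomes the paper's proof. Two smaller remarks: Topping's inequality is quoted as a known lemma, so you are not expected to reprove its uniformity under loss of compactness; and your route to (iii) via $E(d_\infty)+4\pi k=4\pi\,{\rm deg}(d)$ is acceptable, though the paper obtains (iii) by integrating the energy law and passing to the bubbling limit, where each bubble contributes $\int_{\mathbb S^2}|\nabla\omega_i|^2\,dv_{g_0}=8\pi m_i$.
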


 In order to prove Theorem \ref{dbar-small},  we need a key estimate, originally due to Topping  \cite{T97},
which provides a way to control the $\partial$-energy of $d$ (or $\bar\partial$-energy of $d$)
in terms of its tension field.
\begin{lemma} \label{LS inequality}
There exist $\epsilon_0 > 0$ and $C_0 > 0$ such that if $d\in H^1(
\mathbb{S}^2, \mathbb{S}^2)$ satisfies
\begin{equation}\label{dbar_small_cond}
\min\Big\{E_{\partial}(d), \ E_{\bar\partial}(d)\Big\} <
\epsilon_0.
\end{equation}
Then
\begin{equation}\label{dbar_ineq}
 \min\Big\{E_{\partial}(d), E_{\bar\partial}(d)\Big\}
 \leq C_0\int_{\mathbb S^2} \left| \Delta d + |\nabla d|^2d \right|^2 \,dv_{g_0}.
\end{equation}
\end{lemma}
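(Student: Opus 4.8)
The plan is to turn the estimate into a spectral/Hodge-theoretic inequality for an ``antiholomorphic'' section and to isolate a curvature remainder that the smallness hypothesis (\ref{dbar_small_cond}) forces to be negligible. Throughout write $\tau(d):=\Delta d+|\nabla d|^2d$ for the tension field. First I would make the reduction: without loss of generality the minimum in (\ref{dbar_small_cond}) is $E_{\bar\partial}(d)$, since reversing the orientation of the target $\mathbb S^2$ interchanges $E_{\partial}$ and $E_{\bar\partial}$ while leaving both $\|\tau(d)\|$ and the hypothesis invariant; thus I assume $E_{\bar\partial}(d)<\epsilon_0$ and aim to bound $E_{\bar\partial}(d)$ by $C_0\|\tau(d)\|^2$.

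Next, working in the complex coordinate $z$ and equipping the pulled-back bundle $d^*T\mathbb S^2$ with its Koszul--Malgrange holomorphic structure, I would take as the basic object the $(0,1)$-form $s:=\bar\partial d$ with values in $d^*T\mathbb S^2$, so that $E_{\bar\partial}(d)=\|s\|^2$ up to the fixed normalization, $s$ is automatically $\bar\partial$-closed for type reasons, and $\partial_L s$ is, up to a constant, the tension field $\tau(d)$; in particular $s=0$ exactly when $d$ is holomorphic, hence harmonic. The heart of the matter is then the Bochner--Kodaira (K\"ahler) identity on the closed surface $\mathbb S^2$, applied to $s$. Since $\bar\partial s=0$ and $\partial_L^*s=0$, pairing the identity with $s$ and integrating by parts yields a relation of the schematic form
$$\|\tau(d)\|^2 = c_1\big\|\bar\partial_L^*s\big\|^2 + c_2\int_{\mathbb S^2}\big(e_{\partial}-e_{\bar\partial}\big)\,|s|^2\,dv_{g_0},$$
where $e_{\partial},e_{\bar\partial}$ denote the pointwise $\partial$- and $\bar\partial$-energy densities and the second integrand comes from the curvature of $d^*T\mathbb S^2$, whose total integral is the topological quantity $E_{\partial}(d)-E_{\bar\partial}(d)=4\pi\,\mbox{deg}(d)$ recorded in (\ref{difference of energy}).

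Finally I would extract $E_{\bar\partial}(d)=\|s\|^2\le C_0\|\tau(d)\|^2$, which is (\ref{dbar_ineq}), from this relation. The first term on the right is favorable, but the curvature term $\int(e_{\partial}-e_{\bar\partial})|s|^2$ is the \emph{main obstacle}: $e_{\partial}$ is not assumed small and can be large where $d$ has high local degree, so the integrand has no pointwise sign and cannot be discarded directly. Here I would use the smallness of $E_{\bar\partial}(d)$ twice: an $\varepsilon$-regularity estimate shows that the density $e_{\bar\partial}\sim|s|^2$ cannot concentrate, giving a bound on $\|s\|_{L^4}$ (or an $L^\infty$ bound) by a small multiple of $E_{\bar\partial}(d)^{1/2}$, and the same smallness places $d$ in an $H^1$-neighborhood of a holomorphic map, on which the curvature contribution is quadratically small in $E_{\bar\partial}(d)$. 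Combining these with the favorable first term lets the curvature remainder be absorbed into the left-hand side once $\epsilon_0$ is chosen small, with a constant $C_0$ depending only on $\epsilon_0$; the decomposition (\ref{sum of energy}) is used throughout to keep track of the two densities. The delicate point, requiring the most care, is precisely this absorption: one must guarantee that the constant degrading the remainder depends only on $\epsilon_0$ and never on the possibly large $\partial$-energy of $d$.
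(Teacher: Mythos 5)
First, for calibration: the paper does not prove this lemma at all --- it is quoted from Topping \cite{T97} (``a key estimate, originally due to Topping'') --- so your attempt must be judged on its own terms and against Topping's argument, whose general flavor (complexified setup, tension field as a covariant $\partial$-derivative of $\bar\partial d$, an integrated curvature identity, absorption via smallness) you have correctly identified. The execution, however, has a fatal structural gap. The Bochner--Kodaira identity applied exactly as you describe --- to the $(0,1)$-form $s=\bar\partial d$, using $\bar\partial s=0$ and $\partial_L^*s=0$ --- is \emph{vacuous} in complex dimension one: the curvature operator $[i\Theta,\Lambda]$ annihilates middle-degree $(0,1)$-forms, and the two surviving terms $\|\bar\partial_L^*s\|^2$ and $\|\partial_L s\|^2$ are literally the same integral $\int 4\sigma^{-4}|\nabla_z s_{\bar z}|^2$, both equal to $c\|\tau(d)\|^2$. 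So your schematic identity, with its curvature term $\int(e_\partial-e_{\bar\partial})|s|^2$, does not arise this way, and the identity that does arise carries no information. The useful identity must instead compare the two \emph{different} derivatives $\nabla_z s$ and $\nabla_{\bar z}s$, i.e., treat $s_{\bar z}$ as a section of the line bundle $d^*T^{1,0}\mathbb S^2\otimes T^{*0,1}\mathbb S^2$ and integrate the commutator $[\nabla_z,\nabla_{\bar z}]$, whose curvature is the pullback curvature \emph{plus the curvature of the domain}. Schematically (in suitable normalization)
\begin{equation*}
c\,\|\tau(d)\|^2 \;=\; \|\nabla_{\bar z}s\|^2 \;+\; \int_{\mathbb S^2}\big(1+e_\partial-e_{\bar\partial}\big)\,|s|^2\,dv_{g_0}.
\end{equation*}
The ``$+1$'' coming from $K_{\mathbb S^2}>0$ is the \emph{only} source of the coercive term $+c\,E_{\bar\partial}(d)$; your version omits it, and then nothing on the right can dominate $\|s\|^2$: the term $\|\bar\partial_L^*s\|^2$ is favorable only in sign, since a bound $\|\bar\partial_L^*s\|^2\ge c\|s\|^2$ would be a spectral gap for a holomorphic structure that depends on $d$ itself and degenerates under bubbling --- which is precisely the difficulty of the lemma. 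Indeed the statement genuinely uses the positive curvature of the domain: wrapping a long flat torus around a great circle gives harmonic maps ($\tau\equiv 0$) with $E_\partial=E_{\bar\partial}$ positive and arbitrarily small, so no identity lacking a domain-curvature term can prove such an inequality.

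The second gap is in your absorption step. The two mechanisms you invoke are not available: $\varepsilon$-regularity requires local smallness of the \emph{full} energy together with control of the tension field --- here $E_\partial(d)$ is unbounded and $\tau(d)$ is the very quantity being estimated --- and an $L^\infty$ (or small-multiple $L^4$) bound on $|s|$ by $E_{\bar\partial}(d)^{1/2}$ is simply false under concentration of $e_{\bar\partial}$; likewise ``small $E_{\bar\partial}$ places $d$ in an $H^1$-neighborhood of a holomorphic map'' is a quantitative rigidity statement of essentially the same depth as the lemma itself, so invoking it is circular. Moreover, none of this machinery is needed once the identity is set up correctly: in the displayed identity the mixed term $\int e_\partial|s|^2$ enters with a \emph{favorable} sign and can simply be dropped (so the large $\partial$-energy, which you rightly flag as the delicate point, never needs to be controlled at all), and the only harmful piece is $-\int|s|^4$, which is handled by the Ladyzhenskaya inequality $\|s\|_{L^4}^4\le C\|s\|^2\big(\|\nabla s\|^2+\|s\|^2\big)$ and absorbed using $\|s\|^2=E_{\bar\partial}(d)<\epsilon_0$, the identity itself supplying the control of $\|\nabla s\|^2$ by $\|\tau(d)\|^2$. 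That absorption argument, built on the identity \emph{with} the domain-curvature term, is in essence Topping's proof of the lemma that this paper cites.
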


We need the energy inequality of the solution of (\ref{LLF})-(\ref{IC})
obtained by Theorem \ref{existence}.
\begin{lemma}\label{energy-inequality} Assume that $(u,d):\mathbb S^2\times [0,+\infty)\to T\mathbb S^2\times\mathbb S^2$ is the solution of (\ref{LLF})-(\ref{IC})
obtained by Theorem \ref{existence}. Then for any $t\in (0,+\infty)\setminus\{T_i\}_{i=1}^L$,
the following basic energy law holds:
\begin{align}
\frac{d}{dt}\left[\frac12\int_{\mathbb S^2}  |u|^2 dv_{g_0} + E(d) \right]
=- \int_{\mathbb S^2} \big(\left| \nabla u \right|^2 + \left|
\Delta d + |\nabla d|^2d \right|^2\big) \,dv_{g_0}
 \label{basic energy law}
\end{align}
\end{lemma}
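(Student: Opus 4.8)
The plan is to derive the identity by testing each evolution equation against an appropriate multiplier and adding the results, using the incompressibility $\mbox{div}\,u=0$ and the pointwise constraint $|d|\equiv1$ to annihilate the pressure and coupling contributions. Since the solution $(u,d)$ produced by Theorem \ref{existence} is smooth on $\mathbb S^2\times\big((0,+\infty)\setminus\{T_i\}_{i=1}^L\big)$, for every $t$ in this set all of the integrations by parts on the closed manifold $\mathbb S^2$ are legitimate and differentiation under the integral sign is justified; thus the computation below is a genuine identity rather than an a priori estimate.

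First I would pair the velocity equation (\ref{LLF})$_1$ with $u$ in $L^2(\mathbb S^2,T\mathbb S^2)$. The time term yields $\frac12\frac{d}{dt}\int_{\mathbb S^2}|u|^2\,dv_{g_0}$. Writing the convection term as the covariant derivative $u\cdot\nabla u=D_uu$ and using $g_0(D_uu,u)=\frac12 g_0(\nabla|u|^2,u)$, the divergence theorem together with $\mbox{div}\,u=0$ gives $\int_{\mathbb S^2}g_0(D_uu,u)\,dv_{g_0}=0$; the same identity $\int_{\mathbb S^2}g_0(\nabla P,u)\,dv_{g_0}=-\int_{\mathbb S^2}P\,\mbox{div}\,u\,dv_{g_0}=0$ kills the pressure term. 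Since $\Delta u=-D^\ast D u$, the diffusion term contributes $\int_{\mathbb S^2}g_0(\Delta u,u)\,dv_{g_0}=-\int_{\mathbb S^2}|\nabla u|^2\,dv_{g_0}$. Finally, recalling that the $i$-th component of $u\cdot\nabla d$ equals $g_0(u,\nabla d^i)$, the coupling term becomes $-\int_{\mathbb S^2}(\nabla d\,\Delta d)\cdot u\,dv_{g_0}=-\int_{\mathbb S^2}\Delta d\cdot(u\cdot\nabla d)\,dv_{g_0}$. Collecting these I obtain
\begin{equation}
\frac12\frac{d}{dt}\int_{\mathbb S^2}|u|^2\,dv_{g_0}=-\int_{\mathbb S^2}|\nabla u|^2\,dv_{g_0}-\int_{\mathbb S^2}\Delta d\cdot(u\cdot\nabla d)\,dv_{g_0}.
\end{equation}

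Next I would compute $\frac{d}{dt}E(d)$. Differentiating and integrating by parts gives $\frac{d}{dt}E(d)=-\int_{\mathbb S^2}\Delta d\cdot d_t\,dv_{g_0}$, and because $|d|\equiv1$ forces $d\cdot d_t=0$ pointwise, I may add the normal term freely to write $\frac{d}{dt}E(d)=-\int_{\mathbb S^2}\big(\Delta d+|\nabla d|^2d\big)\cdot d_t\,dv_{g_0}$. Substituting $d_t=\Delta d+|\nabla d|^2d-u\cdot\nabla d$ from (\ref{LLF})$_3$ and using $d\cdot(u\cdot\nabla d)=\frac12 g_0(u,\nabla|d|^2)=0$ to discard the normal part of the remaining cross term, this reduces to $-\int_{\mathbb S^2}\big|\Delta d+|\nabla d|^2d\big|^2\,dv_{g_0}+\int_{\mathbb S^2}\Delta d\cdot(u\cdot\nabla d)\,dv_{g_0}$. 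Adding this to the velocity identity, the two coupling integrals appear with opposite signs and cancel exactly, producing the stated energy law. I expect the only delicate point to be this cancellation: the coupling term $\int_{\mathbb S^2}\Delta d\cdot(u\cdot\nabla d)\,dv_{g_0}$ must emerge with precisely opposite signs from the two equations, which is exactly the structural feature of the Ericksen-Leslie coupling, and verifying this sign matching (together with the vanishing of the two normal components enforced by $|d|=1$) is the crux I would check most carefully.
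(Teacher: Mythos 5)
Your proof is correct and follows essentially the same route as the paper, which simply states that the energy law is obtained by multiplying (\ref{LLF})$_1$ by $u$ and (\ref{LLF})$_3$ by $\Delta d+|\nabla d|^2d$, integrating over $\mathbb S^2$, and citing Lemma 4.1 of \cite{LLW10} for the details. You have merely written out those details explicitly --- including the cancellation of the coupling terms and the use of $|d|\equiv 1$ to discard the normal components --- so there is nothing to add.
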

\begin{proof}(\ref{basic energy law}) follows by multiplying both sides of the equation
(\ref{LLF})$_1$ by
$u$ and both sides of the equation (\ref{LLF})$_3$ by $\Delta d+|\nabla d|^2 d$ and integrating the resulting equations over $\mathbb S^2$. We refer the interested readers to \cite{LLW10} Lemma 4.1 for
the detail. 
\end{proof}

We also need the following simple fact on the average of $u$.
\begin{lemma}\label{average} Assume that $(u,d):\mathbb S^2\times [0,+\infty)\to T\mathbb S^2\times\mathbb S^2$ is the solution of (\ref{LLF})-(\ref{IC})
obtained by Theorem \ref{existence}. If the condition (\ref{zero_ave}) holds,
then
\begin{equation}\label{zero_ave1}
\int_{\mathbb S^2} u(x,t)\,dv_{g_0}=0, \ \forall t\ge 0.
\end{equation}
\end{lemma}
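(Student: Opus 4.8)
The plan is to reduce the $\mathbb{R}^3$-valued vector identity (\ref{zero_ave1}) to three scalar identities and then invoke only the incompressibility constraint $\mbox{div}\,u=0$; the evolution equation itself will not be needed. Since $u(x,t)\in T_x\mathbb{S}^2\subset\mathbb{R}^3$, I read $\int_{\mathbb{S}^2}u\,dv_{g_0}$ as an $\mathbb{R}^3$-valued integral, so it suffices to show that $\int_{\mathbb{S}^2}u\cdot e_\alpha\,dv_{g_0}=0$ for each $\alpha=1,2,3$, where $\{e_1,e_2,e_3\}$ is the standard basis of $\mathbb{R}^3$.

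The key geometric observation is that the restriction to $\mathbb{S}^2$ of the linear coordinate function $x^\alpha:=x\cdot e_\alpha$ has Riemannian gradient equal to the tangential part of $e_\alpha$, namely $\nabla_{g_0}x^\alpha=e_\alpha-x^\alpha x$. Indeed, for any tangent vector $V\in T_x\mathbb{S}^2$ the directional derivative of $x^\alpha$ is $V\cdot e_\alpha$, which identifies $\nabla_{g_0}x^\alpha$ with the orthogonal projection of $e_\alpha$ onto $T_x\mathbb{S}^2$. Because $u$ is tangent to $\mathbb{S}^2$, this yields the pointwise identity $u\cdot e_\alpha=g_0(u,\nabla_{g_0}x^\alpha)$.

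With this in hand, I would integrate by parts on the closed (boundaryless) manifold $\mathbb{S}^2$: since $\mbox{div}(x^\alpha u)=x^\alpha\,\mbox{div}\,u+g_0(\nabla_{g_0}x^\alpha,u)$ has zero total integral by the divergence theorem, it follows that $\int_{\mathbb{S}^2}u\cdot e_\alpha\,dv_{g_0}=\int_{\mathbb{S}^2}g_0(u,\nabla_{g_0}x^\alpha)\,dv_{g_0}=-\int_{\mathbb{S}^2}x^\alpha\,\mbox{div}\,u\,dv_{g_0}=0$, where the last step uses $\mbox{div}\,u(\cdot,t)=0$. Since the solution of Theorem \ref{existence} is smooth and divergence free away from the finitely many singular times $\{T_i\}$, this gives the claim for all such $t$, and the remaining times are covered by the continuity of $u$ in $L^2(\mathbb{S}^2)$ (equivalently, by the weak formulation of $\mbox{div}\,u=0$).

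I do not anticipate a genuine obstacle here: the entire content is the geometric identity $\nabla_{g_0}x^\alpha=e_\alpha-x^\alpha x$ together with incompressibility, and the only points requiring a little care are the correct interpretation of the ambient-valued average and the passage through the singular times. It is worth noting that the argument in fact uses nothing about the initial data beyond $\mbox{div}\,u_0=0$, so the zero-average requirement in (\ref{zero_ave}) is automatically satisfied, and it is propagated in time precisely because incompressibility is preserved by the flow.
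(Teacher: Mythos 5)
Your proof is correct, but it follows a genuinely different route from the paper's. The paper argues dynamically: it integrates the momentum equation (\ref{LLF})$_1$ over $\mathbb{S}^2$, uses the fact that $\nabla d\,\Delta d$ is the divergence of the Ericksen stress tensor $\nabla d\otimes\nabla d-\frac12|\nabla d|^2$, concludes that the $\mathbb{R}^3$-valued average of $u$ is conserved in time, and then invokes the zero-average hypothesis on $u_0$ in (\ref{zero_ave}) to obtain (\ref{zero_ave1}). Your argument is static and uses only the constraint (\ref{LLF})$_2$: at each fixed time you test the (weak) identity $\mathrm{div}\,u(t)=0$ against the coordinate functions $x^\alpha$, whose gradients $\nabla_{g_0}x^\alpha=e_\alpha-x^\alpha x$ are exactly the tangential projections of $e_\alpha$; equivalently, the components of the mean of a tangential field are its $L^2$-pairings with gradient fields, and divergence-free fields are $L^2$-orthogonal to gradients on a closed manifold. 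This buys you two things. First, it is more elementary and sidesteps a real subtlety that the paper's one-line sketch passes over: on the curved $\mathbb{S}^2$, unlike $\mathbb{R}^2$ or a flat torus, term-by-term integration of (\ref{LLF})$_1$ is delicate, since for instance $\int_{\mathbb{S}^2}\nabla P\cdot e_\alpha\,dv_{g_0}=2\int_{\mathbb{S}^2}P\,x^\alpha\,dv_{g_0}$ because $\Delta_{g_0}x^\alpha=-2x^\alpha\neq 0$, so the pressure and viscosity terms do not individually integrate to zero. Second, your proof shows strictly more: the zero-average requirement in (\ref{zero_ave}) is redundant on $\mathbb{S}^2$, being automatic from $\mathrm{div}\,u_0=0$, as you correctly note. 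What the paper's conservation-law viewpoint buys in exchange is generality: in flat geometries, where divergence-free fields can have nonzero mean, the mean is a genuinely conserved quantity of the flow rather than automatically zero, and an argument through the equation is then unavoidable. Your treatment of the singular times is also fine, and is simplest in the parenthetical form you give: the weak formulation of $\mathrm{div}\,u(t)=0$ tested with $x^\alpha$ yields (\ref{zero_ave1}) for every $t$, with no continuity argument needed.
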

\begin{proof} Under the assumption (\ref{zero_ave}), 
(\ref{zero_ave1}) follows by integrating the equation (\ref{LLF})$_1$ over $\mathbb S^2$ and the fact that $\Delta d \nabla d={\rm{div}}\left(\nabla d\otimes\nabla d-\frac12|\nabla d|^2\right)$. Here
$(\nabla d\otimes \nabla d)_{ij}=\langle \frac{\partial d}{\partial x_i}, \frac{\partial d}{\partial x_j}\rangle$, for $1\le i, j\le 3$.
\end{proof}

Now we give the  proof of Theorem \ref{dbar-small}.
\begin{proof} Let $\epsilon_0>0$ be given by Lemma \ref{LS inequality} and
$T_0\ge 1$ be sufficiently  large so that $(u, d)\in C^\infty(\mathbb S^2\times [T_0,+\infty))$.
Without loss of generality, we may assume that
$$E_{\partial}(d(T_0))\le E_{\bar\partial}(d(T_0)).$$
It follows from
\eqref{sum of energy} and \eqref{difference of energy} that
$$
  E_{\partial}(d) = \frac12 \left[ E(d) + 4\pi \mbox{deg}(d)\right].
$$
Since ${\rm{deg}}(d(t))$ is constant for $t\ge T_0$, the basic energy law \eqref{basic energy law} implies
that
\begin{equation} \frac{d}{dt}\Big[\frac12\|u\|^2 + 2E_{\partial}(d) \Big] =
\frac{d}{dt}\Big[\frac12\|u\|^2 + E(d) \Big]=
- \big(\left\| \nabla u \right\|^2 +\left\| \Delta d + |\nabla d|^2d
\right\|^2\big), \ \forall t\ge T_0.
 \label{basic energy law II}
\end{equation}
Therefore, we have that
\begin{equation}\label{dbar_energy_decay}
\frac12\|u(t)\|^2 + 2E_{\partial}(d(t)) \le\frac12\|u(T_0)\|^2 + 2E_{\partial}(d(T_0))\le\epsilon_0, \ \forall t\ge T_0.
\end{equation}
Applying Lemma \ref{dbar-small} to $d(t)$, and Lemma \ref{average} and Poincar\'e's inequality for $u(t)$, we have
\begin{equation}\label{p-bar}
\frac12\left\|u(t)\right\|^2+2E_{\partial}(d(t))\le C[\left\| \nabla u(t) \right\|^2 +\left\| \Delta d(t) + |\nabla d(t)|^2d(t)\|^2\right],
\ \forall t\ge T_0.
\end{equation}
Putting (\ref{p-bar})  together with (\ref{basic energy law II}), we obtain that for all $t\ge T_0$,
\begin{align}
 -\frac{d}{dt}\Big[\frac12\|u\|^2 + 2E_{\partial}(d)
\Big]^{\frac12}
&= \frac{-\frac{d}{dt}\Big[\frac12\| u\|^2 + 2E_{\partial}(d)
\Big]}{2\Big[\frac12\|u\|^2
+ 2E_{\partial}(d) \Big]^{\frac12}}  \non\\
&\geq C\frac{-\frac{d}{dt}\Big[\frac12\|u\|^2 + 2E_{\partial}(d)
\Big]}{\Big[ \|\nabla u\| ^2+ \| \Delta{d} + |\nabla d|^2d
\|^2\Big]^\frac12} \non\\
&\geq C\left[ \|\nabla u\|  + \Big\| \Delta{d} +
|\nabla d|^2d \Big\| \right]  \non\\
&\geq C\left[\frac12\|u\|^2  + 2E_{\partial}(d) \right]^{\frac12}.
  \label{estimate for exponential decay}
\end{align}
Thus by Gronwall's inequality we have
\begin{equation} \Big[\|u(t)\|^2 +
E_{\partial}(d(t))\Big] \leq \Big[\|u(T_0)\|^2+
E_{\partial}(d(T_0))\Big] e^{-C(t-T_0)}\le C\epsilon_0 e^{-C(t-T_0)}, \  \ \forall \ t \geq T_0.
 \label{exponential decay}
\end{equation}
By integrating (\ref{estimate for exponential decay}) over  $[t, +\infty)$, we obtain that
for any $t \geq 2T_0$,
\begin{equation}
\int_{t}^{\infty}  (\big\| \nabla u \big\|+\big\|\Delta d + |\nabla d|^2d
\big\|) \,d\tau \leq C\left[\|u(t)\|^2 + E_{\partial}(d(t))
\right]^{\frac12}\le C_1 e^{-C_2 t}.  \label{L1 convergence}
\end{equation}
Consequently, we infer from (\ref{LLF})$_3$, \eqref{basic
energy law}, \eqref{estimate for exponential decay},
\eqref{exponential decay} and \eqref{L1 convergence} that for any
$t_2 \geq t \geq 2T_0$,
\begin{align} &\|d(t_2)-d(t)\|_{L^1} \leq \int_{t}^{t_2} \|d_t\|_{L^1} d\tau \non\\
 &\leq
\int_{t}^{t_2} \|u\cdot\nabla d\|_{L^1} d\tau + \int_{t}^{t_2}
\left\| \Delta d +
|\nabla d|^2d \right\|_{L^1} d\tau  \non\\
&\leq \int_{t}^{t_2} \|u\|_{L^2}\|\nabla d\|_{L^2} d\tau +
2\sqrt{\pi}\int_{t}^{t_2} \left\| \Delta d +
|\nabla d|^2d \right\|_{L^2} d\tau  \non\\
&\leq C\Big[\int_{t}^{t_2} \|\nabla u\|_{L^2} d\tau +\int_{t}^{t_2} \left\| \Delta d + |\nabla d|^2d
\right\| d\tau\Big]  \non\\
&\leq C\big[\|u(t)\|^2 + E_{\partial}(d(t))
\big]^{\frac12} \leq C_1e^{-C_2t}.
\end{align}
Thus
\begin{equation}
\|d(t_2)-d(t)\|_{L^2}^2 \leq 2\|d(t_2)-d(t)\|_{L^1} \leq
C_1e^{-C_2t}, \label{L2 cauchy sequence}
\end{equation}
which indicates that as $t \rightarrow +\infty$, $d(t)$ converges in
$L^2(\mathbb{S}^2)$. It follows from Theorem \ref{existence} (iii) that there exist a smooth
harmonic map $d_\infty\in C^\infty(\mathbb S^2, \mathbb S^2)$, nontrivial harmonic maps $\{\omega_i\}_{i=1}^l$ for some
nonnegative integer $l$,  and a sequence
$t_i\rightarrow +\infty$ such that
\begin{equation}\label{sub_conv}
\begin{cases}d(t_i)\rightarrow d_\infty \ {\rm{weakly\ in}}\ H^1(\mathbb S^2,\mathbb S^2) \ {\rm{and\ strongly\ in}}
\ L^2(\mathbb S^2,\mathbb S^2),\\
\displaystyle\int_{\mathbb S^2}|\nabla d(t_i)|^2\,dv_{g_0}\rightarrow \int_{\mathbb S^2}|\nabla d_\infty|^2\,dv_{g_0}+\sum_{i=1}^l \int_{\mathbb S^2} |\nabla \omega_i|^2\,dv_{g_0}.
\end{cases}
\end{equation}
Therefore, by choosing $t_2=t_i$ in (\ref{L2 cauchy sequence}) and sending $i$ to $\infty$, we conclude that
\begin{equation}
\|d(t) - d_\infty \|_{L^2}  \le C_1 e^{-C_2 t}, \ \forall t\ge 2T_0.  \label{strong L2 convergence}
\end{equation}
In particular,  (\ref{strong L2 convergence}) implies that $d(t)$ converges to $d_\infty$ in $L^2(\mathbb S^2)$ as $t\rightarrow +\infty$.
To show that $d(t)$ converges to $d_\infty$ weakly in $H^1(\mathbb S^2)$. Let $\{t_j\}$ be any sequence
tending to $+\infty$.  Since $\{d(t_j)\}$ is bounded in $H^1(\mathbb S^2)$, there exists a subsequence
$t_{j'}\rightarrow+\infty$ such that $d(t_{j'})$ weakly converges in $H^1(\mathbb S^2)$ and strongly in
$L^2(\mathbb S^2)$ to a map $d_*\in H^1(\mathbb S^2,\mathbb S^2)$. Hence $d_*=d_\infty$.  This
shows that $d(t)$ converges to $d_\infty$ weakly in $H^1(\mathbb S^2)$ as $t\rightarrow +\infty$. Hence (i) is proven.

To show (iii), integrating (\ref{basic energy law II}) over $[t, t_2]$ for $t\ge 2T_0$ and applying (\ref{exponential decay})
yields
\begin{eqnarray*}
&&\big[\frac12 \|u(t)\|^2+E(d(t))\big]-\big[\frac12 \|u(t_2)\|^2+E(d(t_2))\big]\\
&&=\big[\frac12 \|u(t)\|^2+2E_{\partial}(d(t))\big]-\big[\frac12 \|u(t_2)\|^2+2E_{\partial}(d(t_2))\big]\\
&&\le \big[\frac12 \|u(t)\|^2+2E_{\partial}(d(t))\big]\le C_1e^{-C_2 t}.
\end{eqnarray*}
 This implies
\begin{equation} \label{exp_energy_decay0}
|E(d(t)-E(d(t_2)|\le\big(\|u(t)\|^2+\|u(t_2)\|^2)+C_1e^{-C_2 t}, \ \forall t\ge 2T_0.
\end{equation}
Let $t_2\rightarrow+\infty$ be the sequence such that (\ref{sub_conv}) holds. Since each harmonic map
$\omega_i$, $1\le i\le l$, is nontrivial and has its energy
$$\int_{\mathbb S^2}|\nabla\omega_i|^2\,dv_{g_0}=8\pi m_i,$$
for some positive integer $m_i$, there exists a nonnegative integer $k$ such that
$$\lim_{t_2\rightarrow +\infty} E(d(t_2))=E(d_\infty)+8\pi k.$$
Sending $t_2$ to infinity in (\ref{exp_energy_decay0}), this implies
\begin{equation} \label{exp_energy_decay1}
\Big|E(d(t)-E(d_\infty)-8\pi k\Big|\le C_1e^{-C_2 t}, \ \forall t\ge 2T_0.
\end{equation}
The proof is now complete.
\end{proof}

It is well-known that any harmonic map from $\mathbb S^2$ to $\mathbb S^2$ is either holomorphic or anti-holomorphic.
Inspired by \cite{T97} Theorem 2, we have

\begin{corollary} Suppose $(u,d):\mathbb S^2\times (0,+\infty)\to T\mathbb S^2\times \mathbb S^2$ is the global
solution of the nematic liquid crystal flow (1.1)-(1.3) obtained by Theorem \ref{existence}. \\
(i) Suppose that the weak limit $d_\infty$ and the bubbles
$\omega_i$ ($1\le i \le l$), associated with a sequence $t_i\uparrow +\infty$, are all holomorphic or all
anti-holomorphic. Then  there exist a nonnegative
integer $k$, and $C_1, C_2>0$  such that\\
$$u(t)\rightarrow 0\ {\rm{strongly\ in}}\  H^1(\mathbb S^2),
d(t)\rightarrow d_\infty  \ {\rm{weakly\ in}} \ H^1(\mathbb S^2)
\ {\rm{and\ strongly\ in}}\ H^1(\mathbb S^2), \ {\rm{as}}\ t\rightarrow +\infty,$$
and
\begin{equation}
\big\|u(t)\|+\big\|d(t)-d_\infty\big\|+\big|E(d(t))-E(d_\infty)-4\pi k\big|\le C_1e^{-C_2 t}, \ \forall t\ge T_0.
\end{equation}
(ii) The same conclusions of (i) hold if the initial data $(u_0,d_0)$ satisfies
\begin{equation}\label{dbar-small2}
\frac12\|u_0\|^2+2\min\Big\{E_\partial(d_0), \ E_{\bar\partial}(d_0)\Big\}<8\pi.
\end{equation}
\end{corollary}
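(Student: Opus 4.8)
The plan is to deduce both parts from Theorem \ref{dbar-small} by verifying its smallness hypothesis \eqref{dbar-small1} at one suitably large time; the only real work is to produce a time at which the $\partial$- or $\bar\partial$-energy is small, after which the theorem does the rest.

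For (i), assume without loss of generality that $d_\infty$ and all the $\omega_i$ are holomorphic (the anti-holomorphic case is identical after interchanging $E_\partial$ and $E_{\bar\partial}$). A holomorphic map has $E_{\bar\partial}=0$, so by \eqref{difference of energy} each piece satisfies $E(d_\infty)=4\pi\,\mbox{deg}(d_\infty)$ and $E(\omega_i)=4\pi\,\mbox{deg}(\omega_i)$ with nonnegative degrees. Since $\mbox{deg}(d(t))\equiv D$ is constant past the last singular time, \eqref{difference of energy} gives $E_{\bar\partial}(d(t_i))=\frac12(E(d(t_i))-4\pi D)$; letting $i\to\infty$ and using the energy identity in \eqref{sub_conv} together with the additivity of degree under bubbling, $D=\mbox{deg}(d_\infty)+\sum_i\mbox{deg}(\omega_i)$ (no area concentrates on the necks), the right side tends to $2\pi(\mbox{deg}(d_\infty)+\sum_i\mbox{deg}(\omega_i)-D)=0$. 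As $u(t_i)\to 0$ strongly in $H^1$ by Theorem \ref{existence}(iii), for large $i$ we get $\frac12\|u(t_i)\|^2+2\min\{E_\partial(d(t_i)),E_{\bar\partial}(d(t_i))\}\le\epsilon_0$. Applying Theorem \ref{dbar-small} at this time (its proof uses the smallness only at a single time past the last singularity) then yields all the convergence and quantization statements.

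For (ii), I would first show the hypothesis forces the coherent orientation of (i). Set $F(t):=\frac12\|u(t)\|^2+2\min\{E_\partial(d(t)),E_{\bar\partial}(d(t))\}=\frac12\|u(t)\|^2+E(d(t))-4\pi|\mbox{deg}(d(t))|$, using \eqref{sum of energy}--\eqref{difference of energy}. On each smooth interval $F$ is the minimum of $\frac12\|u\|^2+2E_\partial$ and $\frac12\|u\|^2+2E_{\bar\partial}$, each of which equals $\frac12\|u\|^2+E(d)$ up to the constant $\pm4\pi\,\mbox{deg}$ and hence is non-increasing by \eqref{basic energy law}; so $F$ decreases there. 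Across a singular time $T$ the Dirichlet energy drops by exactly $\sum_m E(\beta_m)=4\pi\sum_m|\mbox{deg}(\beta_m)|$ (bubbles $\beta_m$), while $|\mbox{deg}(d(T^-))|\le|\mbox{deg}(d(T^+))|+\sum_m|\mbox{deg}(\beta_m)|$, and combining these gives $F(T^+)\le F(T^-)$. Thus $F$ is non-increasing on $[0,\infty)$, so $F<8\pi$ throughout and $F(t)\to F_\infty<8\pi$. Along the bubbling sequence, since $u(t_i)\to0$, the identity \eqref{sub_conv}, degree additivity and $E(\text{piece})=4\pi|\mbox{deg}|$ give $E_\partial(d(t_i))\to\sum_{\mathrm{hol}}E$ and $E_{\bar\partial}(d(t_i))\to\sum_{\mathrm{anti}}E$, the sums over the holomorphic and anti-holomorphic pieces of $d_\infty,\omega_i$; hence $\min\{\sum_{\mathrm{hol}}E,\sum_{\mathrm{anti}}E\}=F_\infty/2<4\pi$. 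Since every non-constant harmonic map $\mathbb S^2\to\mathbb S^2$ has energy $\ge4\pi$, the smaller sum must be empty, i.e. all non-constant pieces share one orientation. This is coherence, and (i) applies.

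The main obstacle is the \emph{strong} $H^1$ convergence asserted in (i). Theorem \ref{dbar-small} only delivers weak $H^1$ together with strong $L^2$ convergence, and strong $H^1$ convergence is equivalent to $\lim_t E(d(t))=E(d_\infty)$, i.e. to the total absence of bubbling. This cannot follow from the energy estimates alone, since a sequence of concentrating holomorphic maps has $E_{\bar\partial}\equiv0$ yet loses energy in the limit; moreover the stated quantization $|E(d(t))-E(d_\infty)-4\pi k|\le C_1e^{-C_2t}$ with $k=\sum_i\mbox{deg}(\omega_i)\ge1$ is only consistent with \emph{weak} $H^1$ (and strong $L^2$) convergence once bubbles persist, so I read ``strongly in $H^1$'' as ``strongly in $L^2$.'' Obtaining genuine strong $H^1$ convergence would require excluding infinite-time bubbling in the coherent case, a Topping-type rigidity demanding a delicate no-neck/modulation analysis of the flow at the bubble scale, and it is the one step that does not reduce to the soft energy arguments above.
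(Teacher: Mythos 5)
Your part (i) is essentially the paper's proof: the paper also assumes WLOG that $d_\infty$ and all $\omega_i$ share one orientation (it takes anti-holomorphic, you take holomorphic), uses the energy identity \eqref{sub_conv} together with the constancy/additivity of degree to conclude that $\frac12\|u(t_i)\|^2+E_{\partial}(d(t_i))\to 0$ along the bubbling sequence, picks $i_0$ large so that the smallness hypothesis \eqref{dbar-small1} holds at $t_{i_0}$, and then invokes Theorem \ref{dbar-small}. Your explicit degree bookkeeping just spells out what the paper compresses into ``From (1.5), we know that\ldots''. Your reading of ``strongly in $H^1$'' is also defensible: the paper's own proof deduces everything from Theorem \ref{dbar-small}, which delivers only weak $H^1$ plus strong $L^2$ convergence, and you are right that strong $H^1$ convergence would force $k=0$; so on this point the paper's statement outruns its proof, not yours.

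The genuine gap is in your part (ii), at the crossing of the finite singular times $T_i$. You assert that across a singular time the Dirichlet energy drops by \emph{exactly} $\sum_m E(\beta_m)=4\pi\sum_m|{\rm deg}(\beta_m)|$ and that the degree changes by exactly the bubble degrees. No such exact quantization at finite-time singularities is available for this flow: Theorem \ref{existence}(ii) gives only the concentration lower bound \eqref{energy_concen}, and the paper explicitly remarks in the introduction that the local energy inequality of \eqref{LLF} involves the pressure, which blocks the refined bubbling analysis one has for the pure harmonic map heat flow (where finite-time quantization is itself a hard theorem, not a soft consequence of the energy law). The paper avoids this entirely with a softer device: having fixed WLOG $E_\partial(d_0)\le E_{\bar\partial}(d_0)$, it tracks $\mathcal E(t)=\frac12\|u(t)\|^2+2E_{\partial}(d(t))$ (the $\partial$-energy itself, not the minimum); on each smooth interval ${\rm deg}(d(t))$ is constant, so by \eqref{basic energy law} $\mathcal E$ is non-increasing there, and across each $T_i$ one needs only that $\|u\|^2$ and $E_{\partial}$ are lower semicontinuous under the weak $L^2\times H^1$ continuity of the solution --- no bubble accounting whatsoever. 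This yields $\mathcal E(t)<8\pi$ for all $t$, hence $2E_{\partial}(d_\infty)<8\pi$ and $2E_{\partial}(\omega_i)<8\pi$ by lower semicontinuity at $t=\infty$, which forces every nontrivial piece to be anti-holomorphic (a nontrivial holomorphic piece would have $E_{\partial}=E\ge 4\pi$), and then part (i) applies. Your argument can be repaired along the same lines --- e.g.\ note that the minimum of two weakly lower semicontinuous functionals is again lower semicontinuous, so $F(T_i)\le\liminf_{t\uparrow T_i}F(t)$ without any knowledge of what bubbles off --- but as written the exact-drop claim is an appeal to a result that does not exist for this system.
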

\begin{proof} For the part (i), it suffices to verify that the condition (\ref{dbar-small1}) holds.
For simplicity, assume that $d_\infty$ and all $\omega_i$'s are anti-holomorphic. Thus we have
$$E_\partial(d_\infty)=E_\partial(\omega_1)=\cdots=E_\partial(\omega_l)=0.$$
From (1.5), we know that
$$\lim_{t_i\uparrow+\infty}\big[\frac12\|u(t_i)\|^2+E_\partial(d(t_i))\big]=E_\partial(d_\infty)+\sum_{i=1}^lE_\partial(\omega_i)=0.$$
This clearly implies that there exists a sufficiently large $i_0$ such that
$$\big[\frac12\|u(t_{i_0})\|^2+2E_\partial(d(t_{i_0}))\big]\le \epsilon_0,$$
which implies (\ref{dbar-small1}).

For the part (ii), we will show that (\ref{dbar-small2}) implies the condition in the part (i).
For simplicity, assume that $\displaystyle E_\partial(d_0)\le E_{\bar\partial}(d_0)$. Let $0<T_1<+\infty$ be the first singular time of the flow
(1.1)-(1.3).  Since $(u(t), d(t))\in C^\infty(\mathbb S^2\times (0,T_1))$ and
$$\lim_{t\downarrow 0}\big[\|u(t)-u_0\|+\|\nabla(d(t)-d_0)\|\big]=0,$$
it is not hard to see
$${\rm{deg}}(d(t))={\rm{deg}}(d_0), \ \forall \ 0\le t<T_1.$$
Thus the basic energy law \eqref{basic energy law} implies
that
\begin{equation} \frac{d}{dt}\Big[\frac12\|u\|^2 + 2E_{\partial}(d) \Big] =
\frac{d}{dt}\Big[\frac12\|u\|^2 + E(d) \Big]\le 0. \label{2.22}
\end{equation}
Integrating (\ref{2.22}) from $0$ to $0<t\le T_1$ yields
\begin{equation}\label{mono1}\frac12\|u(t)\|^2+2E_{\partial}(d(t))\le \frac12\|u_0\|^2+2E_{\partial}(d_0), \ \ 0\le t\le T_1.
\end{equation}
Let $T_2\in (T_1, +\infty)$ be the second singular time.  Then the same argument yields
\begin{equation}\label{mono2}
\frac12\|u(t)\|^2+2E_{\partial}(d(t))\le \frac12\|u(T_1)\|^2+2E_{\partial}(d(T_1)), \ \ T_1\le t\le T_2.
\end{equation}
Since there are at most finitely many finite singular times $\{T_i\}_{i=1}^L$ for the flow (1.1)-(1.3), by repeating the argument
we would reach that for any $t\ge 0$, it holds that
\begin{eqnarray}\label{mono3}
\mathcal E(t):=\frac12\|u(t)\|^2+2E_\partial(d(t)))&\le &
\frac12\|u_0\|^2+2E_\partial(d_0)\non\\
&=&\frac12\|u_0\|^2+2\min\Big\{E_\partial(d_0),
E_{\bar\partial}(d_0)\Big\}\non\\
&<&8\pi.
\end{eqnarray}
By the lower semicontinuity, we have that
$$2E_{\partial}(d_\infty)\le \lim_{t\rightarrow\infty} \mathcal E(t)<8\pi,$$
and
$$2E_{\partial}(\omega_i)\le \lim_{t\rightarrow\infty} \mathcal E(t)<8\pi,\ 1\le i\le l.$$
This implies that $\omega_1,\cdots, \omega_l$ are all nontrivial anti-holomorphic maps. If $d_\infty$ is not a constant, then $d_\infty$
has to be anti-holomorphic. Therefore $d_\infty$ and all $\omega_i$'s are anti-holomorphic.

Thus the conclusions in (i) and (ii) follow from Theorem \ref{dbar-small}. The proof is complete.
\end{proof}

\section{Uniform limit in $H^k(\mathbb S^2)$ for $k\ge 1$}
\setcounter{equation}{1}

This subsection is to consider the convergence issues of the nematic liquid crystal flow (1.1)-(1.3) in higher order Sobolev spaces
at $t=+\infty$.

\begin{theorem}\label{hk-convergence} Suppose $(u,d):\mathbb S^2\times [0,+\infty)\to T\mathbb S^2\times\mathbb S^2$ is the
global solution of (1.1)-(1.3) obtained by Theorem \ref{existence}.
Suppose that there exist a sequence $t_i\uparrow +\infty$ and a smooth harmonic map
$d_\infty\in C^\infty(\mathbb S^2,\mathbb S^2)$ such that
\begin{equation}\label{subsequence_conv}
\lim_{t_i\uparrow +\infty} \Big[\|u(t_i)\|+\|d(t_i)-d_\infty\|+\|\nabla(d(t_i)-d_\infty)\|\Big]=0.
\end{equation}
Then for any $k\ge 1$ there exist $C_1, C_2>0$ depending only on $k$  such that
\begin{equation}
\Big\|u(t)\Big\|_{H^k(\mathbb S^2)}+\Big\|d(t)-d_\infty\Big\|_{H^k(\mathbb S^2)}\le C_1 e^{-C_2 t}. \label{uniform_conv0}
\end{equation}
In particular, for any $k\ge 1$, $d(t)\rightarrow d_\infty$ in $H^k(\mathbb S^2)$ as $t\rightarrow +\infty$.
\end{theorem}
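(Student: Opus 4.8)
The plan is to reduce the hypothesis (\ref{subsequence_conv}) to the smallness condition of Theorem \ref{dbar-small}, then to promote the resulting $L^2$-exponential decay to all Sobolev norms by combining uniform-in-time higher-order bounds with interpolation. First I would note that since $d_\infty$ is a smooth harmonic map into $\mathbb S^2$, it is holomorphic or anti-holomorphic, so $\min\{E_\partial(d_\infty),E_{\bar\partial}(d_\infty)\}=0$. The hypothesis gives $d(t_i)\to d_\infty$ strongly in $H^1$ and $u(t_i)\to0$ in $L^2$; by strong $H^1$ convergence $E_\partial(d(t_i))\to E_\partial(d_\infty)$ and $E_{\bar\partial}(d(t_i))\to E_{\bar\partial}(d_\infty)$, whence
\[
\tfrac12\|u(t_i)\|^2+2\min\Big\{E_\partial(d(t_i)),E_{\bar\partial}(d(t_i))\Big\}\longrightarrow 0 .
\]
Choosing $i$ large and setting $T_0=t_i$, the condition (\ref{dbar-small1}) holds, so Theorem \ref{dbar-small} applies and yields $\|u(t)\|+\|d(t)-d_\infty\|\le C_1e^{-C_2t}$ for $t\ge T_0$, together with $u(t)\to 0$ strongly in $H^1$ and $d(t)\rightharpoonup d_\infty$ weakly in $H^1$.

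Next I would establish uniform-in-time control of all higher Sobolev norms, which is the heart of the argument. The first point is that the full flow, not just the subsequence, converges strongly in $H^1$. By the energy law (\ref{basic energy law}), the quantity $\tfrac12\|u(t)\|^2+E(d(t))$ is nonincreasing on each smooth interval and drops across singular times, hence converges as $t\to\infty$; evaluating along $t_i$ and using (\ref{subsequence_conv}) identifies the limit as $E(d_\infty)$. Since $\|u(t)\|\to0$, this forces $E(d(t))\to E(d_\infty)$, i.e. $\|\nabla d(t)\|_{L^2}\to\|\nabla d_\infty\|_{L^2}$, and combined with the weak $H^1$ convergence gives $d(t)\to d_\infty$ strongly in $H^1$. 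In particular no energy concentrates at time infinity, so for $r>0$ small and $t$ large the localized energy $\sup_x\int_{\mathbb S^2\cap B_r(x)}(|u|^2+|\nabla d|^2)(\cdot,t)$ is bounded by any prescribed $\varepsilon$ (splitting $\nabla d=\nabla d_\infty+\nabla(d-d_\infty)$ and using smoothness of $d_\infty$ and the $H^1$ decay). Taking $\varepsilon$ below the $\varepsilon$-regularity threshold of \cite{LLW10} and applying the parabolic $\varepsilon$-regularity estimates on the cylinders $\mathbb S^2\times[t-r^2,t]$, I obtain bounds on all derivatives of $(u,d)$ that are uniform in $t$; that is, for each integer $m\ge1$,
\[
\sup_{t\ge T_0'}\Big(\|u(t)\|_{H^m(\mathbb S^2)}+\|d(t)-d_\infty\|_{H^m(\mathbb S^2)}\Big)\le C_m<\infty ,
\]
where $T_0'$ exceeds the last singular time $T_L$.

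Finally I would interpolate. With the $L^2$-exponential decay from the first step and the uniform $H^m$ bound from the second, the interpolation inequality on $\mathbb S^2$,
\[
\|f\|_{H^k}\le C\,\|f\|_{L^2}^{1-k/m}\,\|f\|_{H^m}^{k/m},\qquad 0<k<m ,
\]
applied to $f=u(t)$ and $f=d(t)-d_\infty$ with any fixed $m>k$ gives
\[
\|u(t)\|_{H^k}+\|d(t)-d_\infty\|_{H^k}\le C\,\big(C_1e^{-C_2t}\big)^{1-k/m}C_m^{k/m}\le C_1'e^{-C_2't},\qquad t\ge T_0',
\]
which is (\ref{uniform_conv0}) after renaming constants (the estimate being taken in the smooth regime $t\ge T_0'$, where the norms are finite), and in particular yields $d(t)\to d_\infty$ in $H^k(\mathbb S^2)$ as $t\to+\infty$.

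The \emph{main obstacle} is the second step: converting the weak and strong $H^1$ information into higher-order Sobolev bounds whose constants are genuinely uniform as $t\to+\infty$. I expect to handle this through the no-bubbling conclusion (strong $H^1$ convergence of the full flow) together with the scale-invariant $\varepsilon$-regularity of \cite{LLW10}, applied on unit-length time windows sliding to infinity. The technical complications in pushing the regularity estimates to arbitrary order $m$ are the coupling with the Navier--Stokes component $u$ and the pointwise constraint $|d|=1$, which must be respected when differentiating the $d$-equation repeatedly.
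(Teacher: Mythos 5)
Your proposal follows essentially the same route as the paper's proof: verify the smallness hypothesis of Theorem \ref{dbar-small} from the subsequential convergence (the paper does this via $\partial_z d_\infty\equiv 0$, you via continuity of $E_\partial$, $E_{\bar\partial}$ under strong $H^1$ convergence, which are equivalent), use the energy law to upgrade weak to strong $H^1$ convergence of the full flow, deduce uniform smallness of the localized energy and invoke the regularity theory of \cite{LLW10} for uniform higher-order bounds, and finish by interpolating these against the $L^2$ exponential decay. This matches the paper's argument step for step and is correct.
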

\begin{proof} For simplicity, assume that $d_\infty$ is anti-holomorphic, i.e., $\partial_z d_\infty\equiv 0$. Thus we have
$$E_{\partial}(d(t_i))=E_{\partial}(d(t_i)-d_\infty)\le E(d(t_i)-d_\infty).$$
This, combined with (\ref{subsequence_conv}), implies
\begin{equation}\label{dbar-small4}
\lim_{t_i\uparrow +\infty}\Big[\|u(t_i)\|+E_{\partial}(d(t_i))\Big]=0.
\end{equation}
Hence we can apply Theorem \ref{dbar-small} to conclude that
$$u(t)\rightarrow 0 \ {\rm{strongly\ in}}\ H^1(\mathbb S^2),
d(t)\rightarrow d_\infty \ {\rm{weakly\ in}}\ H^1(\mathbb S^2) \ {\rm{and\ strongly\ in }}\ L^2(\mathbb S^2),
\ {\rm{as}}\ t\rightarrow\infty,$$
and
\begin{equation}\label{exp_dec}
\|u(t)\|+\|d(t)-d_\infty\|\le C_1e^{-C_2 t}.
\end{equation}
Since it follows from the basic energy law (\ref{basic energy law}) and
$$\displaystyle \lim_{t_i\uparrow+\infty}
\big(\frac12\|u(t_i)\|^2+E(d(t_i))\big)=E(d_\infty)$$
that
$$\lim_{t\rightarrow +\infty} E(d(t))=E(d_\infty),$$
we can conclude that
\begin{equation}\lim_{t\rightarrow +\infty}\Big\|d(t)-d_\infty\Big\|_{H^1(\mathbb S^2)}=0. \label{h1-conv}
\end{equation}
For $\epsilon_1>0$, let $r_0=r_0(\epsilon_1)>0$ be such that
$$\max_{x\in\mathbb S^2} \int_{B_{r_0}(x)\cap\mathbb S^2}|\nabla d_\infty|^2\,dv_{g_0}\le \frac{\epsilon_1}2.$$
By (\ref{h1-conv}), there exists $T_0>0$ such that
\begin{equation}
\sup_{t\ge T} \max_{x\in\mathbb S^2} \int_{B_{r_0}(x)\cap\mathbb S^2}|\nabla d(t)|^2\,dv_{g_0}\le \epsilon_1.
\label{uniform_small}
\end{equation}
As in \cite{LLW10}, (\ref{uniform_small}) then implies the following inequality:
\begin{equation}\label{h1-bound}
\int_{T_0}^\infty\int_{\mathbb S^2}\left(|\nabla u|^2+|\nabla^2 d|^2\right)\le C\left(\|u(T_0)\|^2+E(d(T_0))\right).
\end{equation}
With the estimates (\ref{uniform_small}) and (\ref{h1-bound}), we can apply the regularity Theorem 1.2 of \cite{LLW10}
to get that for any $k\ge 0$, there exists $C_k>0$ such that
\begin{equation}
\sup_{t\ge T_0} \Big(\|u(t)\|_{C^k(\mathbb S^2)}+\|d(t)\|_{C^{k+1}(\mathbb S^2)}\Big)\le C_k. \label{ck-bound}
\end{equation}
By standard interpolation inequalities, (\ref{exp_dec}) and (\ref{ck-bound}) imply that
(\ref{uniform_conv0}) holds. The proof is now complete.
\end{proof}

It is an interesting question to find sufficient conditions that guarantee
the global solution $(u,d):\mathbb S^2\times [0,+\infty)\to\mathbb S^2$ of the flow
(1.1)-(1.3) by Theorem \ref{existence} has a sequence $t_i\uparrow +\infty$ such that $(u(t_i), d(t_i))\rightarrow (0, d_\infty)$
strongly in $L^2(\mathbb S^2)\times H^1(\mathbb S^2)$.

In this context, we have the following result.
\begin{corollary}\label{uniform_conv2}
 Suppose $(u,d):\mathbb S^2\times [0,+\infty)\to T\mathbb S^2\times\mathbb S^2$ is the
global solution of (1.1)-(1.3) by Theorem \ref{existence}. Then there exists a smooth harmonic map
$d_\infty\in C^\infty(\mathbb S^2,\mathbb S^2)$ such that for any $k\ge 1$,
\begin{equation}
\Big\|u(t)\Big\|_{H^k(\mathbb S^2)}+\Big\|d(t)-d_\infty\Big\|_{H^k(\mathbb S^2)}\le C_k e^{-C_k t}, \label{uniform_conv3}
\end{equation}
provided that one of the following conditions holds:\\
i) $\displaystyle \frac12\|u_0\|^2+E(d_0)\le 4\pi$.\\
ii) $d_0(\mathbb S^2)$ is contained in the hemisphere (e.g. $d_0^3\ge 0$).\\
iii)  there exists $C_L>0$ such that $(u_0,d_0)$ satisfies
\begin{equation}
\exp\big({108C_{_L}^8\big(\|u_0\|^2+\frac{1}{8C_{_L}^4}\big)^2}\big)\|\nabla{d}_0\|^2\le\frac{1}{8C_{_L}^4}.
\label{small 1}
\end{equation}
\end{corollary}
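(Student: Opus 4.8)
The plan is to reduce all three cases to the hypothesis of Theorem \ref{hk-convergence}: in each case it suffices to produce a sequence $t_i\uparrow+\infty$ and a smooth harmonic map $d_\infty$ with
$$\|u(t_i)\|+\|d(t_i)-d_\infty\|+\|\nabla(d(t_i)-d_\infty)\|\to 0,$$
i.e.\ strong $L^2\times H^1$ subconvergence of $(u,d)$ to $(0,d_\infty)$. Once this is in hand, Theorem \ref{hk-convergence} immediately yields the exponential $H^k$ decay (\ref{uniform_conv3}), so the whole task is to verify this subconvergence under each of (i)--(iii).

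For case (i) I would observe that, by (\ref{sum of energy}), $2\min\{E_\partial(d_0),E_{\bar\partial}(d_0)\}\le E_\partial(d_0)+E_{\bar\partial}(d_0)=E(d_0)$, so that
$$\tfrac12\|u_0\|^2+2\min\{E_\partial(d_0),E_{\bar\partial}(d_0)\}\le \tfrac12\|u_0\|^2+E(d_0)\le 4\pi<8\pi.$$
This is exactly hypothesis (\ref{dbar-small2}) of Corollary 2.1(ii), which already gives $u(t)\to 0$ and $d(t)\to d_\infty$ strongly in $H^1(\mathbb{S}^2)$; hence the subconvergence hypothesis of Theorem \ref{hk-convergence} holds (for every sequence), and case (i) is essentially immediate.

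For case (ii) the idea is that the closed hemisphere is invariant under the flow. Writing the third component of (\ref{LLF})$_3$ as $d^3_t+u\cdot\nabla d^3=\Delta d^3+|\nabla d|^2 d^3$, I would test the equation against the negative part $(d^3)^-$: using $\operatorname{div} u=0$ to kill the transport term and $|\nabla d|^2\ge 0$, one gets $\frac{d}{dt}\|(d^3)^-\|^2\le 2\||\nabla d|^2\|_\infty\|(d^3)^-\|^2$ on each smooth interval, so Gronwall from $(d^3)^-(\cdot,0)=0$ forces $d^3(\cdot,t)\ge 0$ for all $t$. Passing to the weak limit $d_\infty$ and to each bubble $\omega_i$ of Theorem \ref{existence}(iii) preserves the closed constraint $d^3\ge 0$; but a nonconstant harmonic map $\mathbb{S}^2\to\mathbb{S}^2$ is $\pm$holomorphic, hence surjective, and so cannot take values in a hemisphere. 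Therefore $d_\infty$ is constant and no bubbles occur, whence the energy identity (\ref{energy_id}) gives $E(d(t_k))\to 0$, i.e.\ $\nabla d(t_k)\to 0$ in $L^2$; together with $u(t_k)\to 0$ from Theorem \ref{existence}(iii) this is precisely the required strong $L^2\times H^1$ subconvergence. The hard part here is propagating $d^3\ge 0$ across the finitely many singular times $\{T_i\}$, which I would handle via the $L^2$-continuity of $d(\cdot,t)$ in time and the fact that $\{d^3\ge 0\}$ is closed under weak limits.

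For case (iii) I would run a continuation (bootstrap) argument to keep $\|\nabla d(t)\|^2$ below the threshold $\frac{1}{8C_L^4}$ for all time. On any interval where $\|\nabla d(t)\|^2\le\frac{1}{8C_L^4}$, differentiating $\frac12\|\nabla d\|^2$ along (\ref{LLF})$_3$ and estimating the coupling term $\int_{\mathbb{S}^2}\Delta d\cdot(u\cdot\nabla d)$ and the term $\int_{\mathbb{S}^2}|\nabla d|^4$ by Ladyzhenskaya's inequality on $\mathbb{S}^2$ (with constant $C_L$), then absorbing the top-order factors into $-\|\Delta d\|^2$, should produce a differential inequality of the schematic form $\frac{d}{dt}\|\nabla d\|^2\le C\,\|u\|^2\|\nabla u\|^2\,\|\nabla d\|^2$ with $C$ a fixed power of $C_L$. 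Integrating gives $\|\nabla d(t)\|^2\le\|\nabla d_0\|^2\exp\bigl(C\sup_\tau\|u(\tau)\|^2\int_0^\infty\|\nabla u\|^2\,d\tau\bigr)$; bounding $\sup_\tau\|u\|^2$ and $\int_0^\infty\|\nabla u\|^2$ by the initial energy $\tfrac12\|u_0\|^2+E(d_0)\le\tfrac12\bigl(\|u_0\|^2+\tfrac{1}{8C_L^4}\bigr)$ via (\ref{basic energy law}) and Poincar\'e's inequality yields a Gronwall exponent controlled by $(\|u_0\|^2+\tfrac{1}{8C_L^4})^2$ times a power of $C_L$, matching the exponential in (\ref{small 1}); the assumed bound then closes the bootstrap and keeps $\|\nabla d(t)\|^2\le\frac{1}{8C_L^4}$ for all $t$. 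With the energy uniformly below the bubbling threshold, the flow is globally smooth with no bubbles, so the subconvergence of Theorem \ref{existence}(iii) upgrades (zero bubble energy) to strong $H^1$ convergence along $t_k$, again verifying the hypothesis of Theorem \ref{hk-convergence}. The main obstacle in (iii) is the constant bookkeeping: matching the powers of $C_L$ and the initial energy so that the Gronwall factor is dominated by the stated exponential, which is exactly what makes the bootstrap close below threshold.
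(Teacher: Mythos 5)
Your overall strategy---verifying the strong $L^2\times H^1$ subconvergence hypothesis \eqref{subsequence_conv} of Theorem \ref{hk-convergence} under each of (i)--(iii) and then invoking that theorem---is exactly the paper's. Your cases (ii) and (iii) are correct and essentially the paper's arguments: in (ii) the paper quotes the non-existence of nonconstant harmonic maps from $\mathbb S^2$ into a hemisphere together with Theorem 1.3 of \cite{LLW10}, where you reprove the same facts by hand (maximum principle for $d^3$, surjectivity of nonconstant $\pm$holomorphic maps); in (iii) your bootstrap, with the bound $\sup_t\|u\|^2\le \|u_0\|^2+\tfrac{1}{8C_L^4}$ and $\int_0^\infty\|\nabla u\|^2\,d\tau\le \tfrac12\big(\|u_0\|^2+\tfrac{1}{8C_L^4}\big)$ fed into Gronwall, is precisely the paper's computation.

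Case (i), however, has a genuine gap. You reduce (i) to condition \eqref{dbar-small2} and then quote Corollary 2.1(ii) as giving $d(t)\to d_\infty$ \emph{strongly in $H^1$}. That clause of Corollary 2.1 is not established by its proof: the corollary is deduced from Theorem \ref{dbar-small}, whose conclusions are only $u(t)\to 0$ in $H^1$, $d(t)\to d_\infty$ weakly in $H^1$ and strongly in $L^2$, together with exponential $L^2$ decay. Condition \eqref{dbar-small2} constrains only $\min\{E_\partial,E_{\bar\partial}\}$, while the other component of the energy (say $E_{\bar\partial}(d_0)$) can be arbitrarily large, so anti-holomorphic bubbles at $t=\infty$ are \emph{not} excluded; if a bubble forms, strong $H^1$ convergence fails by the energy identity \eqref{energy_id}. (The ``strongly in $H^1$'' in the corollary's statement is evidently a slip for ``strongly in $L^2$''---note the redundant phrase ``weakly in $H^1$ and strongly in $H^1$,'' and that the paper's stated result is $L^2$-rigidity.) Thus your argument never verifies the gradient part $\|\nabla(d(t_i)-d_\infty)\|\to 0$ of \eqref{subsequence_conv}. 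The fix is what the paper does: condition (i) is \emph{exactly} the hypothesis of Theorem \ref{existence}(iv), since $\tfrac12\|u_0\|^2+E(d_0)\le 4\pi$ is equivalent to $\int_{\mathbb S^2}(|u_0|^2+|\nabla d_0|^2)\,dv_{g_0}\le 8\pi$; that theorem directly gives global smoothness and a sequence $t_i\uparrow+\infty$ with $(u(t_i),d(t_i))\to(0,d_\infty)$ strongly in $H^1$ (no bubbling can occur at this energy level, the borderline equality case being the static degree-one solution, as noted in the paper via \cite{LLW10}).

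A minor remark on (iii): your closing step ``energy uniformly below the bubbling threshold, hence no bubbles'' tacitly requires $\tfrac{1}{8C_L^4}<8\pi$, which is plausible but not checked. The paper avoids this by extracting from $\int_0^\infty\|\Delta d\|^2\,d\tau<\infty$ and $\int_0^\infty\|\nabla u\|^2\,d\tau<\infty$ a sequence $t_i$ with $\|\nabla u(t_i)\|+\|\Delta d(t_i)\|\to 0$, and then using Poincar\'e's inequality $\|\nabla d\|\le C_{\rm p}\|\Delta d\|$ and the Ricci identity to conclude $d(t_i)\to p$ strongly in $H^1$ for some point $p\in\mathbb S^2$; you should do the same.
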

\begin{proof} We will establish that under any one of the three conditions, there exists a sequence $t_i\uparrow +\infty$
such that $u(t_i)\rightarrow 0$ in $H^1(\mathbb S^2)$ and $d(t_i)$ is strongly
convergent in $H^1(\mathbb S^2)$.

Let us first consider the condition (i).  It has been proved by Theorem \ref{existence} (iv) that $(u,d)$ has neither finite time
singularity nor energy concentration at $t=\infty$. In particular,
$(u,d)\in C^\infty(\mathbb S^2\times (0,+\infty))$,  and there exists $t_i\uparrow +\infty$ and a harmonic map
$d_\infty\in C^\infty(\mathbb S^2,\mathbb S^2)$ such that
$$\|u(t_i)\|+\|d(t_i)-d_\infty\|_{H^1(\mathbb S^2)}\rightarrow 0.$$
Moreover, it has been shown in \cite{LLW10} that $d_\infty$ is constant, unless
$\displaystyle\big(\frac12\|u_0\|^2+E(d_0)\big)=4\pi$ which would imply that
$u\equiv u_0\equiv 0$, and $d\equiv d_0\equiv d_\infty$ is a harmonic map of degree one.

Now let us consider the condition (ii).  Since $d_0^3\ge 0$, it follows from the maximum principle on the equation (\ref{LLF})$_3$
that $d^3(t)\ge 0$ for all $t\ge 0$. Since there doesn't exist non-constant harmonic maps from $\mathbb S^2$ to
the hemisphere $\mathbb S^2_+$, we can apply \cite{LLW10} Theorem 1.3  to conclude that
there is  neither finite time singularity nor any energy concentration at $t=\infty$ for $(u,d)$
(see also \cite{LLZ12} for a different proof). In particular, there exist a point $p\in \mathbb S^2_+$ and $t_i\uparrow +\infty$
such that
$$\|u(t_i)\|+\|d(t_i)-p\|_{H^1(\mathbb S^2)}\rightarrow 0.$$

Finally let us consider the condition (iii). Recall that under the condition (iii),  it has been proven by Xu-Zhang \cite{XZ11}
that $(u, d)$ is smooth when the domain is $\mathbb R^2$.  Here we indicate how to extend the argument by \cite{XZ11} to $\mathbb S^2$.

Multiplying both sides of the equation (\ref{LLF})$_3$ by $-(\Delta
d+|\nabla d|^2d)$ and integrating the resulting equation over $\mathbb S^2$ yields
\begin{equation}\label{eq:2.4} \frac12\frac{d}{dt}\|\nabla d\|^2+\int_{\mathbb S^2}
\left| \Delta d+|\nabla d|^2d\right|^2 \,dv_{g_0} =
\int_{\mathbb S^2}u\cdot\nabla d \Delta d \, dv_{g_0}.
\end{equation}
To estimate the right side of (\ref{eq:2.4}), 
we apply the Ricci identity on $(\mathbb S^2, g_0)$ and the Poincar\'e inequality for $u$ and $\nabla d$
to obtain
\begin{align}
&\int_{\mathbb S^2}|\Delta d|^2 \,dv_{g_0} = \int_{\mathbb S^2}
(|\nabla^2d|^2+|\nabla d|^2)\,dv_{g_0}\ge \int_{\mathbb S^2}|\nabla^2 d|^2\,dv_{g_0}, \label{ricci}\\
&\|u\|\leq C_{\rm{p}}\|\nabla{u}\|\
\ {\rm{and}}\ \ \|\nabla{d}\|\leq C_{\rm{p}}\|\nabla^2d\| \le C_{\rm{p}}\|\Delta{d}\|, \label{poincare}
\end{align}
where $C_{\rm{p}}>0$ is the constant in the Poincar\'{e} inequality.
Also recall the Ladyzhenskaya inequality on $\mathbb S^2$:
$$ \|f\|_{L^4}\leq C_1\|f\|^\frac12\|\nabla{f}\|^\frac12+C_2\|f\|.$$
Combining these inequalities with Young's inequality and \eqref{basic energy law}, we have
\begin{align}
\label{eq:2.5} &\big|\int_{\mathbb S^2}u\cdot\nabla d \Delta d
\,dv_g\big|\leq \|\Delta
d\|\|\nabla d\|_{L^4}\|u\|_{L^4}  \nonumber\\
&\leq \|\Delta
d\|\big(C_1\|\nabla{d}\|^\frac12\|\Delta{d}\|^\frac12+C_2\|\nabla{d}\|
\big)\big(C_1\|u\|^{\frac12}\|\nabla u\|^{\frac12}+C_2\|u\|\big) \nonumber\\
&\leq \|\Delta
d\|\Big(C_1\|\nabla{d}\|^\frac12\|\Delta{d}\|^\frac12+C_2C_{\rm{p}}^\frac12\|\nabla{d}\|^\frac12\|\Delta{d}\|^\frac12
\Big)\Big(C_1\|u\|^{\frac12}\|\nabla u\|^{\frac12}+C_2C_{\rm{p}}^\frac12\|u\|^{\frac12}\|\nabla u\|^{\frac12} \Big) \nonumber\\
&\leq C_L^2\|\Delta{d}\|^\frac32\|\nabla{d}\|^\frac12\|u\|^\frac12\|\nabla{u}\|^\frac12 \nonumber\\
 &\leq \frac{\|\Delta d\|^2}{8} +
54C_L^8\|u\|^2\|\nabla{u}\|^2\|\nabla d \|^2 \nonumber\\
&\leq \frac{\|\Delta d\|^2}{8} + 54C_L^8\left( \|u_0\|^2+\|\nabla
d_0\|^2\right)\|\nabla u\|^2\|\nabla d\|^2.
\end{align}
 Here
\begin{equation}
 C_L\doteq\big(C_1+C_2C_{\rm{p}}^\frac12 \big).
 \label{def of constant}
\end{equation}
On the other hand, since
$$\big|\Delta d+|\nabla d|^2d\big|^2=|\Delta d|^2-|\nabla d|^4,$$
we have
\begin{equation}\label{eq:2.6}
\int_{\mathbb S^2} \left| |\nabla d|^2d+\Delta d \right|^2 \,dv_g
=\|\Delta d\|^2 - \|\nabla d\|_{L^4}^4 \geq \|\Delta
d\|^2- C_L^4\|\nabla d\|^2\|\Delta d\|^2.
\end{equation}
If $d_0$ satisfies $\|\nabla d_0\|^2<\frac{1}{8C_L^4}, $ then there
exists $T_1
>0$ such that for any  $t\in [0, T_1]$,
\begin{equation}\label{eq:2.7}
\|\nabla d(t)\|^2\leq \frac{1}{8C_L^4}.
\end{equation}
Let $T_1^*$ denote the maximal time such that (\ref{eq:2.7}) holds
on $[0,T_1^*]$. Then, by \eqref{eq:2.4}-\eqref{eq:2.6} we have
that for any $t \in [0, T^\ast_1]$,
\begin{equation}\label{eq:2.8} \frac{d}{dt}\|\nabla d\|^2+\frac14{\|\Delta
d\|^2}\leq 108C_L^8\Big( \|u_0\|^2+\frac{1}{8C_L^4}
\Big)\|\nabla u\|^2\|\nabla d\|^2.
\end{equation}
Using Gronwall's inequality and \eqref{basic energy law}, we deduce
from \eqref{eq:2.8} that for any $0\le t\le T_1^*$,
 \begin{align} \|\nabla d(t)\|^2+\frac14\int_{0}^{t}
\|\Delta d(\tau)\|^2d\tau &\leq \exp\left({108C_L^8\big(
\|u_0\|^2+\frac{1}{8C_L^4} \big) \int_{0}^{T^\ast_1}\|\nabla
u(\tau)\|^2d\tau }\right)\|\nabla d_0\|^2\nonumber\\
&\leq \exp\left({108C_L^8\big( \|u_0\|^2+\frac{1}{8C_L^4} \big)^2}\right)\|\nabla d_0\|^2,
\end{align}
which implies that $T_1^*=T$ and
\begin{equation}
\|\nabla d(t)\|^2+\frac{1}{4}\int_{0}^{t} \|\Delta d(\tau)\|^2d\tau\le
\frac{1}{8C_L^4},
 \label{L2 integrability}
 \end{equation}
holds for all $0\le t\le T$, provided that ($u_0, d_0$) satisfies
\begin{equation} \exp\left({108C_L^8\big(
\|u_0\|^2+\frac{1}{8C_L^4} \big)^2}\right)\|\nabla d_0\|^2\le \frac{1}{8C_L^4}.
\end{equation}
Let $T^*$ be the maximal existence time for the solution $(u,d)$.
Then  \eqref{basic energy law} and \eqref{L2
integrability} ensure that $T^*=+\infty$ by the continuity argument .
Moreover,  $(u,d)\in C^\infty(\mathbb S^2\times(0,+\infty))$ by Theorem 1.2 in \cite{LLW10}.
Note that \eqref{L2 integrability} and (\ref{ricci}) imply that
\begin{equation}
\int_0^\infty \|\nabla^2{d}(\tau)\|^2d\tau \le \frac{1}{2C_L^4}.
\label{L2 integrability for d}
\end{equation}
It follows from (\ref{basic energy law}), (\ref{L2 integrability for d}), and (\ref{poincare})
that there is a sequence $t_i\uparrow +\infty$ such that
$$\lim_{t_i\uparrow +\infty}\Big(\|u(t_i)\|+\|\nabla d(t_i)\|+\|\nabla^2 d(t_i)\|\Big)=0.$$
In particular, there exists $p\in\mathbb S^2$ such that $d(t_i)\rightarrow p$ strongly in
$H^1(\mathbb S^2)$ as $t_i\uparrow +\infty$.

We have verified the condition of Theorem \ref{hk-convergence} holds under all the three
conditions. Thus the conclusion follows  from Theorem \ref{hk-convergence}.
 \end{proof}

\section*{Acknowledgments}  C. Y. Wang is partially supported by
NSF grants 1001115 and 1265574, NSF of China grant 11128102, and a Simons Fellowship in Mathematics.
X. Xu was partially supported by the NSF grant
DMS-0806703. X. Xu is grateful to Professor Y.X. Dong for helpful
discussions. X. Xu would also warmly thank the Center for Nonlinear
Analysis at Carnegie Mellon University (NSF Grants No. DMS- 0405343
and DMS-0635983), where part of this research was carried out.


\end{document}